\newtheorem{theorem}{Theorem}
\newtheorem{corollary}{Corollary}
\newtheorem{lemma}{Lemma}
\newenvironment{definition}
{\smallskip\noindent{\bf Definition\/}:}{\smallskip\par}
\newenvironment{proposition}
{\smallskip\noindent{\bf Proposition\/}.}{\smallskip\par}
\newenvironment{example}
{\smallskip\noindent{\bf Example\/}.}{\smallskip\par}
\newenvironment{remark}
{\smallskip\noindent{\bf Remark\/}.}{\smallskip\par}
\newenvironment{remarks}
{\smallskip\noindent{\bf Remarks\/}.}{\smallskip\par}
\newenvironment{proof}
{\noindent{\bf Proof\/}.}{{ $\Box$}\smallskip\par}
\title{Combinatorial computation of the motivic Poincar\'e series}
\author{E. Gorsky}
\begin{document}

\maketitle

\begin{abstract}
We give an explicit algorithm computing the motivic generalization of the Poincar\'e series of a plane curve singularity
introduced by A. Campillo, F. Delgado and S. Gusein-Zade. It is done in terms of the embedded resolution. The result is a rational function depending of the parameter $q$, at $q=1$ it coincides with the Alexander polynomial of the corresponding link. For irreducible curves we relate this invariant to the Heegaard-Floer knot homology constructed by P. Ozsv\'ath and Z. Szab\'o. Many explicit examples are considered.
\end{abstract}

\newpage

\tableofcontents

\newpage

\section{Introduction}

In the series of articles (e.g. \cite{cdg},\cite{cdg2}) A. Campillo, F. Delgado and S. Gusein-Zade proved that the Alexander polynomial of the link of the plane curve singularity is related to the generating function arising in the purely algebraic setup.

Let $C=\cup_{i=1}^{r}C_i$ be a germ of a plane curve, $$\gamma_i:(\mathbb{C},0)\rightarrow (C_i,0)$$
are the uniformizations of its components. If $f\in \mathcal{O}=\mathcal{O}_{\mathbb{C}^2,0}$ is a germ of a function on $(\mathbb{C}^2,0)$, we define
$$v_i(f)=Ord_{0}f(\gamma_i(t)),$$
and the Poincar\'e series of the curve $C$ is defined (\cite{cdg2}) as the integral with respect to the Euler characteristic
\begin{equation}
\label{inthi}
P^{C}(t_1,\ldots,t_r)=\int_{\mathbb{P}\mathcal{O}}t_1^{v_1}\cdot\ldots\cdot {t_r}^{v_r}d\chi,
\end{equation}
where $\mathbb{P}\mathcal{O}$ denotes the projectivization of $\mathcal{O}$ as a vector space.
For example, if $C$ is irreducible, we can define the decreasing filtration 
\begin{equation}
\mathcal{O}\supset J_{1}\supset J_{2}\supset\ldots,\,\,\,\,\,\,\,\,\,\,\,\,\, J_n=\{f\in \mathcal{O}|v_1(f)\ge n\},
\end{equation}
and
\begin{equation}
\label{poi}
P^{C}(t)=\sum_{n=0}^{\infty}t^{n}\dim J_{n}/J_{n+1}.
\end{equation}

Let $\Delta^{C}(t_1,\ldots,t_n)$ denote the Alexander polynomial of the intersection of $C$ with a small sphere centered at the origin.
The theorem of Campillo, Delgado and Gusein-Zade says that if $r=1$, then
\begin{equation}
(1-t)P^{C}(t)=\Delta^{C}(t),
\end{equation}
and if $r>1$, then
$$P^{C}(t_1,\ldots,t_r)=\Delta^{C}(t_1,\ldots,t_r).$$

In \cite{cdg3} there was proposed the following natural generalization of the Poincar\'e series.
One can naturally define the motivic measure on the space of functions, and consider the following motivic integral, generalizing (\ref{inthi}):
\begin{equation}
\label{intmu}
P_{g}^{C}(t_1,\ldots,t_r)=\int_{\mathbb{P}\mathcal{O}}t_1^{v_1}\cdot\ldots\cdot {t_r}^{v_r}d\mu.
\end{equation}
If $r=1$, we can rewrite (\ref{intmu}) as the generalization of (\ref{poi}):
\begin{equation}
\label{motpoi}
P^{C}_{g}(t)=\sum_{n=0}^{\infty}t^{n}{q^{\mbox{\rm\tiny codim} J_n}-q^{\mbox{\rm\tiny codim} J_{n+1}}\over 1-q},
\end{equation}
therefore in this case one can deduce $P_{g}(t)$ from $P(t)$.
If $r$ is greater than 1, the situation becomes more complicated. 
Nevertheless, the explicit algorithm for the computation of the motivic Poincar\'e series is presented in Theorem \ref{T3}. 

\begin{definition}
The reduced motivic Poincar\'e series is the power series
\begin{equation}
\overline{P}_{g}(t_1,\ldots, t_r)=(1-qt_1)\cdot\ldots\cdot(1-qt_r)\cdot P_g(t_1,\ldots,t_r).
\end{equation}
\end{definition}

We prove that the reduced motivic Poincar\'e series satisfies the following properties.

\begin{enumerate}
 \item[{\bf 1.}]{\bf  Polynomiality.} $\overline{P}_{g}(t_1,\ldots,t_r;q)$ is a polynomial in variables $t_1,\ldots,t_r$ and $q$. We give a bound for its degree on $t_1,\ldots,t_r$.
 
 \item[{\bf 2.}]{\bf  Reduction to the Alexander polynomial.} If $n=1,$ then
 $$\overline{P}_{g}(t;q=1)=\Delta(t),$$
 where $\Delta$ denote the Alexander polynomial of the link of the corresponding plane curve singularity.
 If $n>1,$ then
 $$\overline{P}_{g}(t_1,\ldots,t_r;q=1)=\Delta(t_1,\ldots,t_r)\cdot\prod_{i=1}^{r}(1-t_i).$$
 
 \item[{\bf 3.}]{\bf  Forgetting components.} Let $C$ be a curve with $r$ components, and $C_1$ be an irreducible curve. Then
 \begin{equation}
 \label{forg}
 \overline{P}_{g}^{C\cup C_1}(t_1,\ldots,t_{r},t_{r+1}=1)=(1-q)\overline{P}_{g}^{C}(t_1,\ldots,t_r).
 \end{equation}
 If $C$ has only one component, then
 $$\overline{P}_{g}^{C}(t=1)=1.$$
 
 \medskip
 
 This property is clear from the equation (\ref{intmu}),
 but seems to be curious and, for example, does not hold for the Alexander polynomial (we cannot reconstruct the Alexander polynomial of a sublink from the Alexander polynomial of a link by setting the corresponding variable to 1).
 
 \item[{\bf 4.}]{\bf  Symmetry.} Let $\mu_{\alpha}$ be the Milnor number (\cite{book}) of $C_{\alpha}$,  let $(C_{\alpha}\circ C_{\beta})$ be the intersection index
of $C_{\alpha}$ and $C_{\beta}$, let $\mu(C)$ be the Milnor number of $C$. Let $$l_{\alpha}=\mu_{\alpha}+\sum_{\beta\neq\alpha}(C_{\alpha}\circ C_{\beta}),\,\,\,\,\,\delta(C)=(\mu(C)+r-1)/2.$$
Remark that $\sum_{\alpha=1}^{r}l_{\alpha}=2\delta(C).$

 It is known that the Alexander polynomial is symmetric in a sense that
 $$\Delta(t_1^{-1},\ldots,t_{r}^{-1})=\prod_{\alpha=1}^{r}t_{\alpha}^{1-l_{\alpha}}\cdot \Delta(t_1,\ldots,t_r),\quad r>1$$
 and
 $$\Delta(t^{-1})=t^{-\mu}\Delta(t),\quad r=1.$$
 In Theorem \ref{T4} we prove a generalization of this identities that holds for any $r$, namely,
$$\overline{P}_{g}({1\over qt_1},\ldots,{1\over qt_r})=q^{-\delta(C)}\prod_{\alpha}t_{\alpha}^{-l_{\alpha}}\cdot \overline{P}_g(t_1,\ldots,t_r).$$

 
 \item[{\bf 5.}]{\bf  Relation to the knot homology.} For irreducible curves we prove that $\overline{P}_{g}(t)$ can be related by the simple procedure to the Poincar\'e polynomial of the Heegaard-Floer knot homology constructed by P. Ozsv\'ath and Z. Szab\'o. This homology theory is a "categorification" of the Alexander polynomial, tightly related with the symplectic topology and Seiberg-Witten theory. Since the origins of our and their construction are quite far, the relation between them seems to be interesting. No conceptual proof for this fact is known, and we just use  that both answers are determined by the Alexander polynomial in the same way. 
\end{enumerate}

\bigskip

The paper is organized in the following way.  In the section 2 we recall the definition of the Poincar\'e series of a plane curve singularity. Then we recall the definition of the motivic measure on the space of functions and give, following \cite{cdg3}, two definitions of the motivic Poincar\'e series as a motivic integral and in terms of the multi-index filtration associated with the curve. We give the simple method of deduction of the motivic Poincar\'e series from the ordinary Poincar\'e series for irreducible curves. In Theorem \ref{cdg} we recall the formula from \cite{cdg3} expressing the motivic Poincar\'e series in terms of the embedded resolution of a curve. This formula is proved by Campillo, Delgado and Gusein-Zade using  thorough analysis of the geometry of the functional spaces defined by the embedded resolution of a curve.

 In the section 3 we apply Theorem \ref{cdg} to a nonsingular curve and explain step-by-step the calculation of all sums involved. It turns out to be a curious exercise, and this simplest example is a toy model for the consequent combinatorial work.

The section 4 contains several steps of the simplification of Theorem \ref{cdg}. In the result (Lemma \ref{L6}) the motivic Poincar\'e series is expressed in terms of some quantities $c_{K}(n)$. In Lemma \ref{L5} the generating function for these quantities is explicitly written in the closed form. This allows to compute the motivic Poincar\'e series.

Applying Lemma \ref{L6} directly, we get a lot of similar summands which cancel after all substitutions, but this cancellation is not clear from lemmas \ref{L5} and \ref{L6}. For example, it is not even clear, that the answer is a polynomial.

Therefore in the rest of section 4 we discuss the analogues of the identity
$$\sum_{n=0}^{\infty}t^{n}q^{n^2+3n\over 2}(q^{-n}-tq)=1$$
arising in the nonsingular case. The result of this investigation is Theorem \ref{T3}, where we formulate an explicit algorithm of calculation of the motivic Poincar\'e series. This algorithm does not involve infinite sums, and can be implemented as a short {\tt Mathematica} program. 

The algorithm is presented in the same manner as in Lemma \ref{L6}: the motivic Poincar\'e series is expressed in terms of some quantities $d_{P}(n)$, which fit into the explicitly defined generating function $H_{P}(u)$. This function is generally more 
complicated than the one from Lemma \ref{L5}, but in some examples (Lemma \ref{L9}) it has more or less compact form.

Section 5 contains a bunch of explicit answers for the curves with resolutions containing up to 3 divisors. 

In the section 6 we prove the symmetry property for the motivic Poincar\'e series (Theorem \ref{T4}). It generalizes the known symmetry property for the Alexander polynomial of a link. From the viewpoint of the algebraic geometry, it is related to the Gorenstein property of the coordinate ring of a curve (\cite{cdk}), thus it seems to be related to the Kapranov's functional equation (\cite{kapr},\cite{hein}) for the motivic zeta function of a curve. 

We prove the symmetry property by proving the analogous statements for all steps of our algorithm: the function 
$H_{P}(u)$ is symmetric, what implies some relations for its coefficients $d_{P}(n)$ and, therefore, for the Poincar\'e series. 
\medskip

The main result of the section 7 is  Theorem \ref{T6} describing the surprising relation between the motivic Poincar\'e series of an irreducible plane curve singularity and another deformation of the Alexander polynomial, namely, the Poincar\'e polynomial for the Heegaard-Floer knot homology (\cite{os2},\cite{os3}). The proof is based on the fact that in both cases the Poincar\'e polynomial (and series) is defined by the Alexander polynomial. We also give some corollaries from this fact which look more geometric.
A filtered complex of $\mathbb{Z}[U]$-modules analogous to the Ozsv\'ath-Szab\'o complex $CFL^{-}(K)$ is constructed. 
This gives an algebraic model for the minus- and hat-versions of the Heegaard-Floer complexes for algebraic knots.  

We also compare the motivic Poincar\'e series with the Heegaard-Floer homologies of two-component links, corresponding to the singularities of type $A_{2n-1}$.
\bigskip

The motivic Poincar\'e series has been independently studied by J. Moyano-Fernandez and W. Zuniga-Galindo in \cite{mozu}.
Their approach is based on the study of the multi-dimensional semigroup of the singularity instead of its resolution.
In particular, they gave alternative proofs of the Theorems \ref{T3} and \ref{T4} of this article.

\section*{Acknowledgements}

This work is partially supported by the grants RFBR-007-00593, RFBR-08-01-00110-a, NSh-709.2008.1 and the Moebius Contest fellowship for young scientists.

The author is grateful to M. Bershtein, A. Gorsky, S. Gukov, S. Gusein-Zade, G. Gusev, A. Kustarev, J. Moyano-Fernandez and W. Zuniga-Galindo  for useful discussions and remarks. Special thanks to A. Beliakova for her impressive lecture on the Heegaard-Floer  homology at the University of Zurich and to J. Rasmussen for his interest to this work.


\section{Poincar\'e series and its generalization}

\subsection{Poincar\'e series}

Let $C=\cup_{i=1}^{r}C_i$ be a reduced plane curve singularity at the origin in $\mathbb{C}^2$, and $C_i$ are its irreducible components. Let $\gamma_i:(\mathbb{C},0)\rightarrow (C_i,0)$ be the uniformizations of these components.

We define $r$ integer-valued functions on the space $\mathcal{O}=\mathcal{O}_{\mathbb{C}^2,0}$ by the formula
$$v_i(f)=\mbox{\rm Ord}_0(f(\gamma_i(t)))$$
and $\mathbb{Z}^r$-indexed filtration 
$$J_{\underline{v}}=\{f\in \mathcal{O}|v_{i}(f)\ge v_i\}.$$
Note that $J_{\underline{v}}$ are also defined for negative values of $\underline{v}$.
This filtration is decreasing in a sense that if $\underline{v}_1\prec \underline{v}_2$, then $J_{\underline{v}_1}\supset J_{\underline{v_2}}$. 
Consider the Laurent series 
$$L_{C}(t_1,\ldots,t_r)=\sum_{\underline{v}}t_1^{v_1}\ldots t_r^{v_r}\cdot\dim J_{\underline{v}}/J_{\underline{v}+\underline{1}}.$$
 
\begin{definition}(\cite{cdk}, \cite{cdg})
The Poincar\'e series of the curve $C$ is defined by the formula
$$P_{C}(t_1,\ldots,t_r)={L_{C}(t_1,\ldots,t_r)\cdot\prod_{i=1}^{r}(t_i-1)\over t_1\cdot\ldots\cdot t_r-1}.$$
\end{definition}

For example, if $r=1$, we have
$$P_{C}(t)=\sum_{v=0}^{\infty}t^{v}\cdot \dim J_{v}/J_{v+1}.$$

One can prove, that $P_{C}$ is always a power series. More geometric meaning of this definition 
is given by the following interpretation of the Poincar\'e series as an integral with respect to the Euler characteristic.

\begin{proposition}(\cite{cdg2})
Let $\mathbb{P}\mathcal{O}$ denote the projectivization of the functional space $\mathcal{O}$ as a vector space.
Then the following equation holds:
\begin{equation}
\label{intchi}
P_{C}(t_1,\ldots,t_r)=\int_{\mathbb{P}\mathcal{O}}t_1^{v_1}\cdot\ldots\cdot t_r^{v_r}d\chi.
\end{equation}
\end{proposition}

On the other hand, consider the link of $C$ -- the intersection of $C$ with a small three-dimensional sphere centred at the origin. We denote its multi-variable Alexander polynomial by $\Delta_C(t_1,\ldots,t_r)$. 
Campillo, Delgado and Gusein-Zade proved the following
\begin{theorem} (\cite{cdg2})
If $r=1$ then
\begin{equation}
P_{C}(t)(1-t)=\Delta_{C}(t),
\end{equation}
and if $r>1$ then
\begin{equation}
P_{C}(t_1,\ldots,t_r)=\Delta_{C}(t_1,\ldots,t_r).
\end{equation}
\end{theorem}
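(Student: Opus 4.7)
The plan is to reduce the identity to an Euler-characteristic calculation on an embedded resolution and then match the outcome against A'Campo's formula for the Alexander polynomial. First I would use the integral representation (\ref{intchi}) to rewrite $P_C$ as an integral over $\mathbb{P}\mathcal{O}$ with respect to $\chi$. Then I would fix an embedded resolution $\pi: X \to (\mathbb{C}^2, 0)$ of $C$ with exceptional components $\{E_\sigma\}_{\sigma \in \Gamma}$, denote by $\tilde{C}_i$ the strict transform of $C_i$, and let $P_i \in E_{\sigma(i)}$ be its transverse point of intersection with the exceptional divisor. For $f \in \mathcal{O}$ write $m_\sigma(f)$ for the multiplicity of $\pi^* f$ along $E_\sigma$. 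The projection formula gives $v_i(f) = m_{\sigma(i)}(f)\cdot(E_{\sigma(i)} \cdot \tilde{C}_i)_{P_i} + \mathrm{ord}_{P_i}\bigl(\widetilde{\mathrm{div}(f)}|_{\tilde{C}_i}\bigr)$, so the integrand $t_1^{v_1}\cdots t_r^{v_r}$ depends only on the multiplicity vector $(m_\sigma(f))_\sigma$ together with local jet data at the points $P_i$.

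Second, I would stratify $\mathbb{P}\mathcal{O}$ by $(m_\sigma(f))_{\sigma \in \Gamma}$ and the combinatorial type of the strict transform of $\mathrm{div}(f)$ relative to $\bigcup E_\sigma \cup \bigcup \tilde{C}_i$. Each stratum fibers, via the freedom to move the strict transform inside its linear equivalence class, over a configuration space of points on the exceptional divisor, with fiber a product of $\mathbb{C}^*$'s (the overall scaling and the residual parameters). Because $\chi(\mathbb{C}^*) = 0$, a stratum contributes to the Euler integral only when its residual parameters are fully pinned down, which forces the strict transform of $\mathrm{div}(f)$ to pass through specific points of a single smooth piece of $E_\sigma$. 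An inclusion-exclusion on the double points $E_\sigma \cap E_{\sigma'}$ then collapses the surviving contributions into a product indexed by smooth points of $\bigcup E_\sigma$, weighted by the multiplicities $N_{\sigma,i} = (E_\sigma \cdot \pi^* C_i)$.

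Third, I would compare this with A'Campo's formula $\Delta_C(t_1, \ldots, t_r) = \prod_\sigma \bigl(1 - \prod_i t_i^{N_{\sigma,i}}\bigr)^{-\chi(\check{E}_\sigma)}$, where $\check{E}_\sigma$ is $E_\sigma$ minus its intersections with the rest of the total transform of $C$. In the multibranch case $r > 1$ the two product formulas match termwise. When $r = 1$ the point $P_1$ where $\tilde{C}_1$ meets $E_{\sigma(1)}$ is deleted from $\check{E}_{\sigma(1)}$ in A'Campo's expression but still contributes a $\mathbb{C}^*$'s worth of smooth points on the Poincare-series side; this single discrepancy produces exactly the factor $(1-t)$ appearing on the left-hand side of the theorem.

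The hard part will be executing the second step cleanly: keeping track of which strata have vanishing Euler characteristic, and showing that jets whose strict transform passes through double (or higher) intersection points of the exceptional divisor cancel under inclusion-exclusion, so that only local data at smooth points of $\bigcup E_\sigma$ survive. This cancellation, together with the identification of the multiplicities $N_{\sigma,i}$ on both sides, is the ``thorough analysis of the geometry of the embedded resolution'' referred to in the introduction, and it is what ultimately drives the formula.
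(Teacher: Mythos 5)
First, a point of reference: the paper does not prove this theorem at all --- it is imported verbatim from Campillo--Delgado--Gusein-Zade \cite{cdg2} (see also \cite{cdg}), so there is no internal proof to compare against. Your outline does follow the strategy of that reference: rewrite $P_C$ as $\int_{\mathbb{P}\mathcal{O}}\underline{t}^{\,\underline{v}}\,d\chi$ via Proposition 1, stratify by the resolution data of $\mathrm{div}(f)$, discard strata whose parameter spaces carry $\mathbb{C}^*$ factors, and compare the surviving product with the A'Campo/Eisenbud--Neumann formula. The architecture is sound, but two things need repair.

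The first is that your second step, which you concede is ``the hard part'', is where all the content lives, and the mechanism you describe is not the right one. The surviving strata are not those whose residual parameters are ``fully pinned down'': for fixed multiplicities $(m_\sigma)$ the relevant parameter space is, up to affine factors of Euler characteristic $1$, a product of symmetric products $S^{n_\sigma}(E_\sigma^{\circ})$ of the punctured exceptional curves, and the computation rests on the identity $\sum_n \chi(S^nX)t^n=(1-t)^{-\chi(X)}$ (the $q=1$ specialization of Lemma 1); the vanishing $\chi(\mathbb{C}^*)=0$ is only used to kill the strata in which $\widetilde{\mathrm{div}(f)}$ passes through a double point of $D$ or through one of the $P_i$. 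The second, more serious, issue is your explanation of the factor $(1-t)$ for $r=1$. A point of $E_{\sigma}^{\circ}$ enters the product with weight $\underline{t}^{\,\underline{N}_\sigma}$, where $N_{\sigma,i}=(E_\sigma\cdot\pi^*C_i)$ is the full multiplicity, not $1$; so deleting or retaining the single point $P_1$ alters the product by $(1-t^{N_{\sigma(1)}})^{\pm1}$ --- for the cusp $x^2=y^3$ this would be $(1-t^6)$, not $(1-t)$. In fact the Euler integral equals the same product $\prod_\sigma(1-\underline{t}^{\,\underline{N}_\sigma})^{-\chi(E_\sigma^{\circ})}$ (with $P_1$ removed from $E_{\sigma(1)}^{\circ}$) for every $r$; one checks this on the cusp, where it gives $\frac{1-t^6}{(1-t^2)(1-t^3)}=1+t^2+t^3+\cdots=P(t)$. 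The asymmetry between $r=1$ and $r>1$ lives entirely on the Alexander-polynomial side: this product is the zeta function of the monodromy, which equals $\Delta(t)/(1-t)$ when $r=1$ (the extra factor being the $H^0$-contribution of the connected Milnor fiber) and equals $\Delta(t_1,\ldots,t_r)$ itself when $r>1$ under the Eisenbud--Neumann conventions. Until the stratification and these normalizations are carried out precisely, the argument is a plausible plan rather than a proof.
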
 

\subsection{Motivic measure}


Let $\mathcal{O}=\mathcal{O}_{\mathbb{C}^2,0}$ be the space
of formal germs of analytic functions at the origin on the plane. 
It is the set of formal power series $f(x,y)$
(without degree 0 term). 
Let $\mathcal{O}_n$ be the space of $n$-jets of such arcs, let $\pi_{n}:\mathcal{O}\rightarrow \mathcal{O}_{n}$ be the natural projection.

Let $K_0(Var_{\mathbb{C}})$ be the Grothendieck ring of complex quasiprojective
 varieties. It is generated by the isomorphism classes
of complex quasiprojective  varieties modulo the relations
 $[X]=[Y]+[X\setminus Y],$ where $Y$ is a Zariski locally closed subset of  $X$. 
Multiplication is given by the formula $[X]\cdot [Y]=[X\times Y].$
Let $\mathbb{L}=[\mathbb{C}]\in K_0(Var_{\mathbb{C}})$ be the class of the  affine line in this ring. 

The Euler characteristic provides
a ring homomorphism 
$$\chi: K_0(Var_{\mathbb{C}})\rightarrow \mathbb{Z}.$$




Consider the ring $K_0(Var_{\mathbb{C}})[\mathbb{L}^{-1}]$
with the following filtration: $F_k$ is generated by the elements
of the type $[X]\cdot [\mathbb{L}^{-n}]$ with $n-\dim X\ge k$. Let $\mathcal{M}$
be the completion of the ring $K_0(Var_{\mathbb{C}})[\mathbb{L}^{-1}]$ corresponding
to this filtration.

On an algebra of subsets of $\mathcal{O}$ Campillo, Delgado and Gusein-Zade (\cite{cdg3}),
following the ideas of Kontsevich,  Denef and Loeser (\cite{dl}) 
constructed a measure $\mu$
with values in the ring  $\mathcal{M}$. 

\begin{definition}(\cite{cdg3})
A subset $A\subset \mathcal{O}$ is said to be cylindric if there exist  $n$ and a constructible set $A_n\subset \mathcal{O}_{n}$ such that
$A=\pi_{n}^{-1}(A_n)$. For the cylindric set $A$ define its {\it motivic measure} by the formula 
$$\mu(A)=[A_n]\cdot \mathbb{L}^{-{(n+1)(n+2)\over 2}}.$$
\end{definition}

Remark that $\dim \mathcal{O}_{n}={(n+1)(n+2)\over 2}$, hence the definition of the motivic measure is in fact independent on $n$.
In a full analogy with \cite{dl}, this measure can be extended to an countable-additive $\mathcal{M}$-valued measure on a suitable algebra of subsets of $\mathcal{O}$.    

\begin{definition}
A function $f:\mathcal{O}\rightarrow G$ with values in an abelian group $G$ is called simple, if its image is countable or finite, and for every $g\in G$ the set $f^{-1}(g)$ is measurable. Using this measure, one can define in the natural way the motivic integral for simple functions on $\mathcal{O}$ as
$$\int_{\mathcal{O}}f d\mu=\sum_{g\in G} g\cdot\mu(f^{-1}(g)),$$
if  the right hand side sum converges in  $G\otimes \mathcal{M}.$
\end{definition}

\begin{remark}
Note that for cylindric sets the Euler characteristic can be defined by the formula $\chi(A)=\chi(A_n)$. This gives a $\mathbb{Z}-$valued measure on the algebra of cylindric sets.
However, it cannot be extended to the algebra of measurable sets.  
This measure provides a notion of an integral with respect to the Euler characteristic for functions on $\mathcal{O}$ with cylindric level sets. It is clear that for such functions
$$\chi(\int_{\mathcal{O}}f d\mu)=\int_{\mathcal{O}}f d\chi .$$  
\end{remark}

Using the same construction, one can define the motivic measure on the projectivization $\mathbb{P}\mathcal{O}$ of the functional space.



As a direct generalisation of the equation (\ref{intchi}) Campillo, Delgado and Gusein-Zade proposed the following

\begin{definition}
Motivic Poincar\'e series is the motivic integral
\begin{equation}
\label{inttmu}
P_{g}^{C}(t_1,\ldots,t_r)=\int_{\mathbb{P}\mathcal{O}}t_1^{v_1}\cdot\ldots\cdot t_r^{v_r}d\mu
\end{equation}
\end{definition}

As above, this definition can be reformulated in terms of the multi-index filtration on the space of functions.
Let $q=\mathbb{L}^{-1}$ be a formal variable. Let $h(\underline{v})=\mbox{\rm \mbox{\rm codim}} J_{\underline{v}}$, and
$$L_{g}(t_1,\ldots,t_r,q)=\sum_{\underline{v}\in\mathbb{Z}^r}{q^{h(\underline{v})}-q^{h(\underline{v}+\underline{1})}\over 1-q}\cdot t_1^{v_1}\ldots t_r^{v_r}.$$
Then the following equation holds (\cite{cdg3}):
\begin{equation}
\label{motfil}
P_{g}^{C}(t_1,\ldots,t_r;q)={L_{g}^{C}(t_1,\ldots,t_r)\cdot\prod_{i=1}^{r}(t_i-1)\over t_1\cdot\ldots\cdot t_r-1}.
\end{equation}

An example of the calculation of the motivic Poincar\'e series for the singularities of type $A_{2n-1}$ directly from the equation (\ref{motfil}) is presented in the section 7.4 below.

\subsection{Irreducible case}

If $r=1$, the equation (\ref{motfil}) has a very clear form, since in this case $$P_g^{C}(t)=L_{g}^{C}(t).$$
Remark that
\begin{equation}
\label{codim}
\mbox{\rm codim} J_{v}=\dim\mathcal{O}/J_1+\dim J_1/J_2+\ldots+\dim J_{v-1}/J_v,
\end{equation}
so the series $P_{g}^{C}(t)$ can be reconstructed from the series $P_{C}(t)$. 

The functional $v(f)=\mbox{\rm Ord}_0 f(\gamma(t))$ is a valuation 
on the ring $\mathcal{O}$.
The set of values of $v$ is an integer semigroup $S=\{\sigma_1,\sigma_2,\sigma_3,\ldots\}$. For example, for the singularity $x^p=y^q$ (its link is the torus $(p,q)$ knot) we have $x(t)=t^q, y(t)=t^p$,
so the corresponding semigroup is generated by $p$ and $q$. 
The coefficient at $t^{v}$
in $P_{C}(t)$ vanishes, if $J_{v}=J_{v+1}$ (or, equivalently, $v$ does not belong to the semigroup $S$) , and equals to 1 otherwise. Therefore we have
$$P_{C}(t)=1+t^{\sigma_1}+t^{\sigma_2}+t^{\sigma_3}+\ldots.$$


Now the equation (\ref{codim}) implies  the following formula for the motivic Poincar\'e series:

\begin{equation}
\label{123}
P_{g}^{C}(t;q)=1+qt^{\sigma_1}+q^2t^{\sigma_2}+q^3t^{\sigma_3}+\ldots.
\end{equation}

\begin{example}
Consider the cusp $x^2=y^3$. Its semigroup is generated by 2 and 3, the Poincar\'e series is equal to
$$P(t)=1+t^2+t^3+t^4+\ldots,$$
the motivic Poincar\'e series is equal to
$$P_{g}(t)=1+qt^2+q^2t^3+q^3t^4+\ldots.$$
Note that 
$$P(t)(1-t)=1-t+t^2,$$
what equals to the Alexander polynomial of the trefoil knot. 
\end{example}

\subsection{Formula of Campillo, Delgado and Gusein-Zade}

In \cite{cdg3} Campillo, Delgado and Gusein-Zade gave a formula for the generalized Poincar\'e series in terms of the resolution.

Let $\pi:(X,D)\rightarrow(\mathbb{C}^2,0)$ be an embedded resolution where $D=\cup_{i=1}^{s}E_i$ is the exceptional divisor. Let $E_{i}^{\bullet}$ be $E_{i}$ without intersection points of $E_{i}$ with other components of $D$,
$E_{i}^{\circ}$ be $E_{i}^{\bullet}$ without intersection points of $E_i$ with the components of the strict transform of our curve. Let $A=(E_i\circ E_j)$ be the intersection matrix and $M=-A^{-1}.$

Let $I_{0}=\{(i,j):i<j,E_i\cap E_j=pt\}$, $K_{0}=\{1,\ldots, r\}$. For $\sigma\in I_0$, $\sigma=(i,j)$ let $i(\sigma)=i$, $j(\sigma)=j$. For $I\subset I_0$, $K\subset K_0$ let
$$\mathcal{N}_{I,K}:=\{\underline{\bf n}=(n_i,n_{\sigma}',n_{\sigma}'',\tilde{n}_{k}',\tilde{n}_{k}''):n_i\ge 0,i=1\ldots, s$$
$$n_{\sigma}',n_{\sigma}'',\sigma\in I; \tilde{n}_{k}'>0,\tilde{n}_{k}''>0,k\in K\}.$$
For $\underline{\bf n}\in \mathcal{N}_{I,K}, i=1,\ldots ,s,$ let
\begin{equation}
\label{hat}
\hat{n}_{i}=n_i+\sum_{\sigma\in I:i(\sigma)=i}n_{\sigma}'+\sum_{\sigma\in I:j(\sigma)=i}n_{\sigma}''+\sum_{k\in K:i(k)=i}\widetilde{n}_k'.
\end{equation}
Let
\begin{equation}
F(\underline{\bf n})={1\over 2}(\sum_{i,j=1}^{s}m_{ij}\hat{n}_{i}\hat{n}_{j}+\sum_{i=1}^{s}\hat{n}_{i}(\sum_{j=1}^{s}m_{ij}\chi(E_{j}^{\bullet})+1))+\sum_{k\in K}\tilde{n}_{k}'',
\end{equation}
$$\overline{F}(\underline{\hat{n}})={1\over 2}(\sum_{i,j=1}^{s}m_{ij}\hat{n}_{i}\hat{n}_{j}+\sum_{i=1}^{s}\hat{n}_{i}(\sum_{j=1}^{s}m_{ij}\chi(E_{j}^{\bullet})+1)),$$
and
$$\underline{w}(\underline{\bf n})=\sum_{i=1}^{s}\hat{n}_{i}\underline{m}_{i},v_{k}(\underline{\bf n}):=w_{i(k)}(\underline{\bf n})+\tilde{n}_{k}''.$$

\begin{theorem}(\cite{cdg3})
\label{cdg}

$$P_{g}(t_1,\ldots,t_r,q)=\sum_{I\subset I_0,K\subset K_0}\sum_{\underline{\bf n}\in \mathcal{N}_{I,K}}q^{F(\underline{\bf n})-\sum_{i=1}^{s}n_i-|I|-|K|}\cdot (1-q)^{|I|+|K|}\times$$
$$\times\prod_{i=1}^{s}\left(\sum_{j=0}^{\min\{n_i,1-\chi(E_{i}^{\circ})\}}(-1)^{j}{1-\chi(E_{i}^{\circ})\choose j}q^j\right)\cdot t^{\underline{v}(\underline{\bf n})}.$$
\end{theorem}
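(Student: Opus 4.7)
The plan is to lift the motivic integral $P_g^C=\int_{\mathbb{P}\mathcal{O}}\prod_k t_k^{v_k}\,d\mu$ to the resolution $\pi:(X,D)\to(\mathbb{C}^2,0)$ and stratify the space $\mathcal{O}$ according to the divisor of the pullback. For a generic $f\in\mathcal{O}$, write $\mathrm{div}(f\circ\pi)=\sum_i\hat n_i E_i + D_f^{h}$, where the horizontal part $D_f^{h}$ meets $D\cup\tilde C$ at finitely many points. The subsets $I\subset I_0$ and $K\subset K_0$ record which of the intersection points of the total transform $D\cup\tilde C$ lie on $D_f^{h}$; the multiplicities $n_i$ count transverse intersections of $D_f^{h}$ with the generic stratum $E_i^{\circ}$, while $(n'_\sigma,n''_\sigma)$ and $(\tilde n'_k,\tilde n''_k)$ split the excess vanishing at each marked point between the two local branches. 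Formula (\ref{hat}) then collects these contributions into $\hat n_i$, and one sees immediately that $v_k(f)=\hat n_{i(k)}+\tilde n''_k$ from the definition of $v_k$.

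Having fixed the stratification, I would compute the motivic measure of each stratum as a product of local contributions. At generic points of $E_i^{\circ}$, choosing $n_i$ transverse intersections of $D_f^{h}$ with $E_i$ is parametrised by the symmetric power $S^{n_i}E_i^{\circ}$; the Macdonald-type identity gives $\sum_n[S^n E_i^{\circ}]q^n=(1-q)^{-\chi(E_i^{\circ})}$ in the appropriate sense, so after normalising by the relevant power of $\mathbb{L}$ one extracts the finite alternating sum $\sum_{j=0}^{\min(n_i,1-\chi(E_i^{\circ}))}(-1)^j\binom{1-\chi(E_i^{\circ})}{j}q^j$, which is polynomial precisely because each $E_i^{\circ}$ is a punctured $\mathbb{P}^1$. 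At each marked point $\sigma\in I$ or $k\in K$ the location is fixed, but the leading coefficients of $f\circ\pi$ must lie in a $\mathbb{C}^{\ast}$-torsor compatible with the prescribed orders along both branches; this yields one factor of $1-q$ per marked point, giving $(1-q)^{|I|+|K|}$. The weight $q^{F(\underline{\bf n})-\sum n_i-|I|-|K|}$ then arises from combining the Denef--Loeser change-of-variables Jacobian (controlled by the discrepancies of $\pi$) with the codimension count of the stratum in the appropriate jet space: the identity $M=-A^{-1}$ together with adjunction $\chi(E_i^{\bullet})=2-K_X\cdot E_i-E_i^2+\sum_{j\neq i}(E_i\cdot E_j)$ on each $E_i$ translates the Jacobian exponent $\sum_i a_i\hat n_i$ and the dimension of the jet stratum into the quadratic form $\overline F(\underline{\hat n})$, while the extra $\tilde n''_k$ in $F$ accounts for horizontal vanishing contributing to $v_k$ beyond the exceptional part.

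The principal obstacle is the careful bookkeeping in this last step: one must verify that the motivic Jacobian exponent plus the motivic codimension of the stratum in $\mathbb{P}\mathcal{O}$ assemble exactly into $\overline F(\underline{\hat n})$, with the corrections $-\sum n_i-|I|-|K|$ correctly absorbing the normalisations from the symmetric products and the marked-point factors. A secondary difficulty is justifying convergence of the triple sum over $I$, $K$, and $\underline{\bf n}\in\mathcal{N}_{I,K}$ in the completed ring $\mathcal{M}[[t_1,\dots,t_r]]$: this follows from negative-definiteness of the intersection form $A$ on the exceptional divisor, which forces $F(\underline{\bf n})$ to grow faster than linearly in the $\hat n_i$ and thus makes the $q$-adic contributions tend to zero.
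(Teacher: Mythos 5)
Your overall strategy --- stratify $\mathcal{O}$ by the combinatorial data $(I,K,\underline{\bf n})$ read off from $\pi^{*}f$, and write $P_{g}$ as a sum over strata of the value of $\underline{t}^{\underline{v}}$ times the motivic measure of the stratum --- is exactly the approach sketched in the paper (following \cite{cdg3}), and your identification of the local factors is the right decomposition: symmetric powers of $E_i^{\circ}$ produce the alternating sums $\sum_j(-1)^j\binom{1-\chi(E_i^{\circ})}{j}q^j$, and a $\mathbb{C}^{*}$ of admissible leading coefficients at each marked point produces $(1-q)^{|I|+|K|}$.

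Two points need repair. First, a concrete error: $v_k(f)$ is \emph{not} $\hat{n}_{i(k)}+\tilde{n}''_k$. The quantity $\hat{n}_i$ is the intersection multiplicity of the strict transform of $\{f=0\}$ with $E_i$, whereas $v_k$ involves the multiplicity $w_i$ of $\pi^{*}f$ along $E_i$; these are related by $\underline{w}=M\underline{\hat{n}}$, because the total transform $\sum_i w_iE_i+(\mbox{strict transform})$ has zero intersection with every $E_j$, so $A\underline{w}=-\underline{\hat{n}}$. Then $v_k=w_{i(k)}+\tilde{n}''_k$ since $\tilde{C}_k$ meets $E_{i(k)}$ transversally once. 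This is precisely where $M=-A^{-1}$ enters the exponent of $t$, and it is not immediate from the definition; with your formula the monomial $t^{\underline{v}(\underline{\bf n})}$ in the theorem would come out wrong. Second, the heart of the theorem --- that the measure of each stratum carries the exponent $F(\underline{\bf n})-\sum_i n_i-|I|-|K|$ exactly --- is left unverified: you name it as ``the principal obstacle'' and appeal to the Denef--Loeser change of variables, but the measure here is the one on the space of functions $\mathcal{O}$ (normalized by $\mathbb{L}^{-(n+1)(n+2)/2}$ on $n$-jets), not the arc-space measure, so the arc-space Jacobian formula does not apply directly. One must instead compute the codimension in $j_N(\mathcal{O})$ of the set of functions with prescribed multiplicities $\underline{w}$ along $D$, which is what yields the quadratic form $\overline{F}(\underline{\hat{n}})$. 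As written, the proposal reproduces the shape of the answer rather than deriving it; your convergence argument via negative definiteness of $A$ is fine.
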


We briefly recall the sketch of the proof from \cite{cdg3}. Consider a function $f\in \mathcal{O}$ and its pullback $\pi^{*}f$ on the  space of resolution $X$. Now let $I(f)$ be the set of intersection points in $D$ such that there are components of the strict transform of $X$ passing through them,  $K(f)$ is the analogous set of intersection points of strict transform of $C$ with $D$. Now $n_i(f)$ is the intersection index of the strict transform of $f$ with the smooth part of $E_i$, $n_{\sigma}'$ and $n_{\sigma}''$ are intersection indices of the component of the strict transform of $f$ passing through $\sigma$ with $E_{i(\sigma)}$ and $E_{j(\sigma)}$ respectively, $\tilde{n}_{k}'$ and $\tilde{n}_{k}''$ are intersection indices of the component passing through the point $k$  with $E_{i(k)}$ and corresponding component of $C$ respectively.   

Given these sets and multiplicities, the value of the function $t_1^{v_1(f)}\cdot\ldots\cdot t_r^{v_r(f)}$ is equal to
$t^{\underline{v}(\underline{\bf n})}.$ Every summand in Theorem 2 is equal to this value multiplied by the motivic measure of the set of functions providing such set of data.


\section{Example: nonsingular curve}

Let us check that for the nonsingular curve the complicated expression from Theorem 2 coincides with the expected one.

We have one divisor and one component of the strict transform of the curve.
We have $I_{0}=\emptyset$, $K_{0}=\{1\}$. Also we have $\chi(E^{\circ})=1,\chi(E^{\bullet})=2,$ hence $1-\chi(E^{\circ})=0$. To sum over $K\subset K_0$, consider two cases:

1) $K=\emptyset$. In this case $F(n)={1\over 2}(n^2+3n)$, and we have a sum
$$\sum_{n=0}^{\infty}t^n q^{n^2+3n\over 2}\cdot q^{-n}$$

2) $K=\{1\}.$ In this case $F(n)={1\over 2}(\hat{n}^2+3\hat{n})+n''$, and we have a sum
$$\sum_{\hat{n}=1}^{\infty}q^{\hat{n}^2+3\hat{n}\over 2}t^{\hat{n}}\sum_{n=0}^{\hat{n}-1}q^{-n-1}(1-q)\sum_{n''=1}^{\infty}q^{n''}t^{n''}=\sum_{\hat{n}=1}^{\infty}q^{\hat{n}^2+3\hat{n}\over 2}t^{\hat{n}}(q^{-\hat{n}}-1)\cdot {qt\over 1-qt}.$$

Summing these two expressions, we get
$$1+\sum_{n=1}^{\infty}t^{n}q^{n^2+3n\over 2}(q^{-n}+{qt\over 1-qt}(q^{-n}-1))=
1+{1\over 1-qt}\sum_{n=1}^{\infty}t^{n}q^{n^2+3n\over 2}(q^{-n}-qt)=$$
$$1+{1\over 1-qt}(\sum_{n=1}^{\infty}t^{n}q^{n(n+1)\over 2}-\sum_{n=1}^{\infty}t^{n+1}q^{(n+1)(n+2)\over 2}).$$
In the last sum all coefficients at $t^{n}$ for $n\ge 2$ cancel, therefore
$$P_{g}(t;q)=1+{tq\over 1-qt}={1\over 1-qt}.$$


\section{Combinatorics}

\subsection{Preliminary simplification}

Let $$P_{k,n}(q)=\sum_{j=0}^{n}(-1)^{j}q^{j}{k\choose j}$$
($k$ can be negative, but $n$ should be non-negative and integer).

\begin{lemma}
\label{L1}
Let $S^{n}X$ denote the $n$th symmetric power of a space $X$. Then
$$[S^{n}(\mathbb{CP}^1-k\{pt\})]=q^{-n}P_{k-1,n}(q).$$
\end{lemma}
\begin{proof}
If $Y$ denote the union of $k$ points on $\mathbb{C}^1$, then we have 
$$S^{m}(\mathbb{CP^1})=\sqcup_{i=0}^{m}S^{i}(Y)\times S^{m-i}(\mathbb{CP}^{1}\setminus Y),$$
what is equivalent to the following multiplicativity property:
$$\sum_{n=0}^{\infty}t^{n}[S^{n}(\mathbb{CP}^1)]=\sum_{n=0}^{\infty}t^{n}[S^{n}(Y)]\cdot \sum_{n=0}^{\infty}t^{n}[S^{n}(\mathbb{CP}^1\setminus Y)].$$
Since
$$\sum_{n=0}^{\infty}t^{n}[S^{n}(\mathbb{CP}^1)]=\sum_{n=0}^{\infty}t^n[\mathbb{CP}^n]={1\over (1-t)(1-\mathbb{L}t)},$$
we get
$$\sum_{n=0}^{\infty}t^{n}[S^{n}(\mathbb{CP}^1-k\{pt\})]={(1-t)^{k-1}\over (1-\mathbb{L}t)}=$$
$$\sum_{a,b}(-1)^{a}{k-1\choose a}t^{a}\mathbb{L}^{b}t^{b}=\sum_{n=0}^{\infty}t^{n}\sum_{a=0}^{n}(-1)^{a}{k-1\choose a}\mathbb{L}^{n-a}=$$
$$\sum_{n=0}^{\infty}t^{n}q^{-n}P_{k-1,n}(q).$$
\end{proof}

Let us fix some notations.

\begin{definition}
Let
$$f_{i}(I,K)=\sum_{\sigma\in I:i(\sigma)=i}1+\sum_{\sigma\in I:j(\sigma)=i}1+\sum_{k\in K:i(k)=i}1,$$
$$f_{i}(I)=\sum_{\sigma\in I:i(\sigma)=i}1+\sum_{\sigma\in I:j(\sigma)=i}1.$$
Note that $\sum_{i=1}^{s}f_{i}(I,K)=2|I|+|K|,\sum_{i=1}^{s}f_{i}(I)=2|I|$.
\medskip

To any divisor $E_i$ we associate the factor
$$\phi_i(I,K,\hat{n})=P_{1-\chi(E_{i}^{\circ})-f_i(I,K),\hat{n}_i-f_i(I,K)},$$
and let
$$G(I,K,\hat{n})=q^{|I|}(1-q)^{|I|+|K|}\prod_{i}\phi_i(I,K,\hat{n}).$$
\end{definition}

Now we can start the simplification of the algorithm proposed in Theorem \ref{cdg}. The next two lemmas will allow us to reduce the summation over all quadruples $(n_i,n_{\sigma}',n_{\sigma}'',\widetilde{n}_k')$ to the summation by  a single variable $\hat{n}_i$ defined by (\ref{hat}).

\begin{lemma}
\label{L2}
Let us fix $\hat{n}_i$. Then
\begin{equation}
\label{primes}
\sum_{n_i,n_{\sigma}',n_{\sigma}'',\widetilde{n}_k'}q^{-n_i-f_i(I,K)}P_{1-\chi(E_{i}^{\circ}),n_i}(q)=q^{-\hat{n}_i}
\phi_i(I,K,\hat{n}).
\end{equation}
\end{lemma}

\begin{proof}
By Lemma \ref{L1} we have 
$$\sum_{n_i,n_{\sigma}',n_{\sigma}'',\widetilde{n}_k'}q^{-n_i-f_i(I,K)}P_{1-\chi(E_{i}^{\circ}),n_i}(q)=
\sum_{n_i,n_{\sigma}',n_{\sigma}'',\widetilde{n}_k'}q^{-f_i(I,K)}[S^{n_i}(E_{i}^{\circ})].$$
Consider a $n_i$-tuple of points on $E_{i}^{\circ}$,  intersection points $\sigma\in I$ such that $i(\sigma)=i$ with multiplicities $n_{\sigma}'-1$, intersection points $\sigma\in I$ such that $j(\sigma)=i$ with multiplicities $n_{\sigma}''-1$, intersection points $k\in K$ such that $i(k)=i$ with multiplicities $\widetilde{n}_{k}'-1$. We get the unordered $\hat{n}_i-f_{i}$-tuple of points on $E_i^{\circ}\cup f_i(I,K)$.  Thus the sum (\ref{primes}) equals to
$$q^{-f_i(I,K)}[S^{\hat{n}_i-f_i(I,K)}(E_i^{\circ}\cup f_i(I,K))]=q^{-\hat{n}_i}P_{1-\chi(E_{i}^{\circ})-f_{i}(I,K),\hat{n}_{i}-f_{i}(I,K)}(q).$$
\end{proof}

\begin{lemma}
\label{L3}
\begin{equation}
\label{simp1}
P_{g}(t_1,\ldots,t_r,q)=\sum_{I\subset I_0,K\subset K_0}\sum_{\hat{n}_{i}\ge f_{i}(I,K)}\underline{t}^{M\underline{\hat{n}}}q^{\overline{F}(\underline{\hat{n}})} 
\prod_{i=1}^{s}q^{-\hat{n}_i}\phi_i(I,K,\hat{n})\times
\end{equation}
$$q^{|I|}(1-q)^{|I|+|K|}\prod_{k\in K}{qt_k\over 1-qt_k}.$$
\end{lemma}

\begin{proof}
First, remark  that for every $k$
$$\sum_{\tilde{n}_{k}''>0}q^{\tilde{n}_{k}''}t_{k}^{\tilde{n}_{k}''}={t_{k}q\over 1-t_{k}q},$$
so from now on we can forget about summation over $\tilde{n}_{k}''$.

We have $$q^{-\sum_{i=1}^{s}n_i-|I|-|K|}=q^{|I|}\prod_{i=1}^{s}q^{-n_i-f_i(I,K)},$$
therefore we can reformulate the statement of Theorem \ref{cdg} in the form 
$$P_{g}(t_1,\ldots,t_r,q)=\sum_{I\subset I_{0},K\subset K_0}q^{|I|}(1-q)^{-|I|}\sum_{\hat{n}_{i}\ge f_{i}(I,K)}\underline{t}^{M\underline{\hat{n}}}q^{\overline{F}(\underline{\hat{n}})}\times$$
$$\prod_{i=1}^{s}\left[\sum_{n_i,n_{\sigma}',n_{\sigma}'',\widetilde{n}_k'}q^{-n_i-f_i(I,K)}P_{1-\chi(E_{i}^{\circ}),n_i}(q) \right].$$
Now the equation (\ref{simp1}) follows from the Lemma \ref{L2}.
\end{proof}

\begin{definition}
By the reduced motivic Poincar\'e series from now on we mean
$$\overline{P}_{g}(t_1,\ldots,t_r)=P_{g}(t_1,\ldots,t_r)\cdot\prod_{j=1}^{r}(1-t_jq).$$ 
\end{definition}

\begin{lemma}
\label{L4}
\begin{equation}
\sum u^{\hat{n}}G(K,I,\hat{n})=q^{|I|}(1-q)^{|I|+|K|}\prod_{i}{u_i^{f_i(K,I)}\over 1-u_i}(1-u_iq)^{1-\chi(E_i^{\circ})-f_i(I,K)}
\end{equation}
\end{lemma}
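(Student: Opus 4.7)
The plan is to exploit the factorized structure of $G(K,I,\hat{n})$. The prefactor $q^{|I|}(1-q)^{|I|+|K|}$ is independent of $\hat{n}$, and each $\phi_i(I,K,\hat{n}) = P_{1-\chi(E_i^\circ)-f_i(I,K),\,\hat{n}_i-f_i(I,K)}(q)$ depends only on the single coordinate $\hat{n}_i$. Since $u^{\hat{n}} = \prod_i u_i^{\hat{n}_i}$ is also a pure product, the multi-sum collapses into a product of one-dimensional sums, one per divisor $E_i$. The summation range $\hat{n}_i \geq f_i(I,K)$ follows from formula (\ref{hat}): the primed and tilded contributions to $\hat{n}_i$ are each at least $1$ by the definition of $\mathcal{N}_{I,K}$, whereas $n_i \geq 0$ contributes nothing extra.

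For each $i$ I would then compute
$$S_i := \sum_{\hat{n}_i \geq f_i(I,K)} u_i^{\hat{n}_i}\, P_{1-\chi(E_i^\circ)-f_i(I,K),\,\hat{n}_i-f_i(I,K)}(q)$$
by shifting $m = \hat{n}_i - f_i(I,K)$ and applying Lemma 1. That lemma records
$$\sum_{m \geq 0} t^m [S^m(\mathbb{CP}^1 - k\{pt\})] = \frac{(1-t)^{k-1}}{1-\mathbb{L}t},$$
so, using $P_{k-1,m}(q) = q^m [S^m(\mathbb{CP}^1 - k\{pt\})]$ together with $\mathbb{L} = q^{-1}$ and the substitution $t = u_i q$, one obtains the clean form
$$\sum_{m \geq 0} u_i^m P_{k-1,m}(q) = \frac{(1-u_i q)^{k-1}}{1-u_i}.$$
Taking $k - 1 = 1 - \chi(E_i^\circ) - f_i(I,K)$ and restoring the factor $u_i^{f_i(I,K)}$ produced by the index shift yields
$$S_i = \frac{u_i^{f_i(I,K)}\,(1-u_i q)^{1-\chi(E_i^\circ)-f_i(I,K)}}{1-u_i}.$$

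Multiplying the $S_i$ together with the $\hat{n}$-independent prefactor $q^{|I|}(1-q)^{|I|+|K|}$ reproduces the right-hand side of the claim. No serious obstacle arises: the identity is a formal consequence of Lemma 1 once the product structure of $G$ has been unfolded. The only points requiring a little bookkeeping are the matching of the parameter $k$ in Lemma 1 with $2 - \chi(E_i^\circ) - f_i(I,K)$, so that $k-1$ gives the correct exponent on $(1 - u_i q)$, and the tracking of the $u_i^{f_i(I,K)}$ factor through the index shift $m = \hat{n}_i - f_i(I,K)$.
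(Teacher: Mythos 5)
Your proof is correct and follows essentially the same route as the paper: both reduce to the single identity $\sum_{m\ge 0}u^{m}P_{k-1,m}(q)=(1-uq)^{k-1}/(1-u)$, which the paper obtains by swapping the order of summation in $P_{k-1,m}$ and you obtain by substituting $t=uq$ into the generating function of Lemma 1 (whose own proof is that same swap). The factorization of the multi-sum over $\hat{n}$ into one sum per divisor and the bookkeeping of the shift $m=\hat{n}_i-f_i(I,K)$ are handled exactly as in the paper.
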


The proof of this lemma can be found in the Appendix.

\begin{definition}
Let 
$$c_{K}(n)=\sum_{I}\sum_{K_1\subset K}(-1)^{|K|-|K_1|}G(K_1,I,n),$$
$$A_{K}(u)=\sum_n u^{n} c_{K}(n).$$
\end{definition}
\medskip

The next lemma provides a closed formula for the function $A_{K}(u)$,
which can be considered as a generating function for the quantities $c_{K}(n)$.

\begin{lemma}
\label{L5}
$$A_K(u)=(-1)^{|K|}\prod_{i}(1-u_i q)^{|\overline{K}\cap E_i|-1}(1-u_i)^{|K\cap E_i|-1}
\prod_{\sigma}(1-qu_{i(\sigma)}-qu_{j(\sigma)}+qu_{i(\sigma)}u_{j(\sigma)}).$$
\end{lemma}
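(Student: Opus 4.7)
The plan is to substitute the generating function of Lemma 4 into the definition of $A_K(u)$ and exploit the resulting factorization over intersection points of the resolution.

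First I would plug in Lemma 4 and pull out the prefactor $\prod_i (1-u_i)^{-1}(1-u_iq)^{1-\chi(E_i^\circ)}$, which is independent of $(I,K_1)$. The remaining $(I,K_1)$-dependent ingredients---$q^{|I|}$, $(1-q)^{|I|+|K_1|}$, the sign $(-1)^{|K|-|K_1|}$, the monomial $\prod_i u_i^{f_i(K_1,I)}$, and the correction $\prod_i(1-u_iq)^{-f_i(I,K_1)}$---each distribute as one local contribution per $\sigma\in I$ and per $k\in K_1$. Concretely, the contribution of $\sigma\in I$ is $q(1-q)u_{i(\sigma)}u_{j(\sigma)}(1-u_{i(\sigma)}q)^{-1}(1-u_{j(\sigma)}q)^{-1}$, and the contribution of $k\in K_1$ is $-(1-q)u_{i(k)}(1-u_{i(k)}q)^{-1}$.

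Next I would use the identity $\sum_{I\subset I_0}\prod_{\sigma\in I}X_\sigma = \prod_{\sigma\in I_0}(1+X_\sigma)$, and its analogue for $K_1\subset K$, to replace the double sum by a double product. A short algebraic step---the cancellation of the $q^2$ terms in the numerator of the edge factor---yields
$$1 + \frac{q(1-q)u_{i(\sigma)}u_{j(\sigma)}}{(1-u_{i(\sigma)}q)(1-u_{j(\sigma)}q)} = \frac{1-qu_{i(\sigma)}-qu_{j(\sigma)}+qu_{i(\sigma)}u_{j(\sigma)}}{(1-u_{i(\sigma)}q)(1-u_{j(\sigma)}q)},$$
and the vertex factor $1 - (1-q)u_{i(k)}/(1-u_{i(k)}q)$ collapses to $(1-u_{i(k)})/(1-u_{i(k)}q)$. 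At this stage the formula already has the shape claimed in the statement, modulo identifying the exponents at each divisor.

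Finally I would collect the exponents of $(1-u_iq)$ and $(1-u_i)$ attached to each $E_i$. Writing $a_i$ for the number of $\sigma\in I_0$ incident to $E_i$ and $b_i$ for the number of $k\in K_0$ with $i(k)=i$, one has $\chi(E_i^\circ)=2-a_i-b_i$, so the prefactor contributes exponent $a_i+b_i-1$ to $(1-u_iq)$. The edge denominators subtract $a_i$ and the vertex denominators subtract $|K\cap E_i|$, leaving $(a_i+b_i-1)-a_i-|K\cap E_i| = |\overline{K}\cap E_i|-1$ for $(1-u_iq)$; for $(1-u_i)$ the exponent $-1$ from the prefactor combines with $+1$ from each $k\in K$ with $i(k)=i$ to give $|K\cap E_i|-1$. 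The expected obstacle is not any individual step but this exponent bookkeeping: one must keep disciplined track of which sum contributes what at each divisor. Once the factorization structure is set up, everything else is a short computation.
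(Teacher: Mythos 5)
Your proposal is correct and follows essentially the same route as the paper: substitute Lemma 4, convert the sums over $I$ and $K_1\subset K$ into products of local edge factors $1+q(1-q)u_{i(\sigma)}u_{j(\sigma)}(1-qu_{i(\sigma)})^{-1}(1-qu_{j(\sigma)})^{-1}$ and vertex factors $1-(1-q)u_{i(k)}(1-qu_{i(k)})^{-1}$, and then track exponents using $\chi(E_i^{\circ})=\chi(E_i^{\bullet})-|K_0\cap E_i|$. The paper merely organizes the same computation by performing the $K_1$-sum divisor by divisor via a binomial expansion before the $I$-sum; your bookkeeping of the exponents of $(1-u_iq)$ and $(1-u_i)$ matches the paper's and is correct.
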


The proof of this lemma can be found in the Appendix. The next lemma expresses the reduced motivic Poincar\'e series in terms of the quantities $c_{K}(n)$.

\begin{lemma}
\label{L6}
\begin{equation}
\label{l6}
\overline{P}_{g}(t_1,\ldots,t_r,q)=\sum_{n} t^{Mn}q^{F(n)-\sum n_i}\sum_{K}t_{K}q^{|K|}c_{K}(n).
\end{equation}
\end{lemma}

\begin{proof}
From the equation (\ref{simp1}) we get
$$P_{g}(t_1,\ldots,t_r,q)=\sum_{I\subset I_0,K\subset K_0}\sum_{\hat{n}_{i}\ge f_{i}(I,K)}\underline{t}^{M\underline{\hat{n}}}q^{\overline{F}(\underline{\hat{n}})} \prod_{i=1}^{s}q^{-\hat{n}_i}\phi_i(I,K,\hat{n})\times $$
$$q^{|I|}(1-q)^{|I|+|K|}\prod_{k\in K}{qt_k\over 1-qt_k}=$$
$$\sum_{I\subset I_0,K\subset K_0}\sum_{\hat{n}_{i}\ge f_{i}(I,K)}\underline{t}^{M\underline{\hat{n}}}q^{\overline{F}(\underline{\hat{n}})} \prod_{i=1}^{s}q^{-\hat{n}_i}\phi_i(I,K,\hat{n})\times q^{|I|}(1-q)^{|I|+|K|}\prod_{k\in K}{qt_k\over 1-qt_k}=$$
$${1\over \prod_{i=1}^{n}(1-qt_i)}\sum_{\hat{n}}t^{Mn}q^{F(n)-\sum n_i}\sum_{K}t_{K}q^{|K|}\sum_{I\subset I_0}\sum_{K_1\subset K}(-1)^{|K|-|K_1|}G(K_1,I,\hat{n})=$$
$${1\over \prod_{i=1}^{n}(1-qt_i)}\sum_{\hat{n}}t^{Mn}q^{F(n)-\sum n_i}\sum_{K}t_{K}q^{|K|}c_{K}(\hat{n}).$$
\end{proof}

Lemma \ref{L6} together with Lemma \ref{L5} gives the explicit description of $\overline{P}_{g}(t)$: it is expressed in terms of some quantities $c_{K}(n)$, which fit together into the generating function $A_{K}(u)$. Lemma \ref{L5} provides a  closed formula for this generating function.

Nevertheless, as the model example with a nonsingular curve shows, lots of summands in the sum (\ref{l6}) have the same power in $t$, and for $n$ large enough we have a huge number of cancellations.

\subsection{Cancellations}

We say that a subset $K\subset K_0$ is {\it proper everywhere}, if for all $i$ $K\cap E_i$ is a proper subset of $K_0\cap E_i$. We denote the set of proper everywhere subsets by $\mathcal{P}$. For any $K\subset K_0$ let $E(K)$ be the set of divisors such that for $i\in E(K)$ the set $K\cap E_i$ is empty. Sometimes we will write $i\in P$, if $i\notin E(P)$.

Using these notations, every subset $K\subset K_0$ can be presented (uniquely) in the following way: we fix a proper everywhere subset $P(K)$ and a set of divisors $E\subset E(P(K))$ where all intersection points with $K_0$ belong to $K$.

For a set $E$ of divisors let $\Delta(E)$ be the number of pairs of intersecting divisors from $E$. 
Let $\mu_i(E)=1$, if $i\in E$ and $\mu_i(E)=0$ otherwise.

\begin{lemma}
\label{L7}
For a proper everywhere set $P$ let 
\begin{equation}
\widetilde{H}_{P}(u_1,\ldots,u_s)=\sum_{E\subset E(P)}(-1)^{|K_0\cap E|}\prod u_i^{-\sum a_{ij}\mu_j}\cdot q^{\Delta(E)}\prod_{i\in E}(q-u_i)^{k_i-1}
\prod_{i\notin (P\cup E)}(1-qu_i)^{k_i-1}
\end{equation}
$$\times \prod_{\sigma}(1-q^{1-\mu_{i(\sigma)}(E)}u_{i(\sigma)}-q^{1-\mu_{j(\sigma)}(E)}u_{j(\sigma)}+q^{1-\mu_{i(\sigma)}(E)-\mu_{j(\sigma)}(E)}u_{i(\sigma)}u_{j(\sigma)}).$$
Then the polynomial $\widetilde{H}_{P}$ is divisible by $\prod_{i\in E(P)}(1-u_i)$.
\end{lemma}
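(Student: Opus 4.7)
My plan is to prove divisibility of $\widetilde{H}_P$ by each linear factor $(1-u_{i_0})$, for $i_0 \in E(P)$, separately; since these linear polynomials are pairwise coprime in $\mathbb{C}[q^{\pm 1},u_1,\ldots,u_s]$, divisibility by the full product $\prod_{i_0 \in E(P)}(1-u_{i_0})$ will follow. So fix $i_0 \in E(P)$. I would show that $\widetilde{H}_P|_{u_{i_0}=1}=0$ by partitioning the subsets $E \subseteq E(P)$ into pairs $\{E,\, E \cup \{i_0\}\}$ with $i_0 \notin E$, and checking that the two corresponding summands of $\widetilde{H}_P$ cancel after setting $u_{i_0}=1$.

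Write $E':=E\cup\{i_0\}$ and compute the ratio of the $E'$-summand to the $E$-summand at $u_{i_0}=1$. Only five ingredients differ between the two summands: (a) the global sign $(-1)^{|K_0\cap E|}$ contributes a factor $(-1)^{k_{i_0}}$; (b) the monomial $\prod_i u_i^{-\sum_j a_{ij}\mu_j(E)}$ contributes $\prod_{i\neq i_0}u_i^{-a_{i,i_0}}$, since the $i=i_0$ factor trivialises at $u_{i_0}=1$; (c) the power $q^{\Delta(E)}$ contributes $q^{n_E(i_0)}$, where $n_E(i_0):=|\{i\in E:a_{i,i_0}\neq 0\}|$ counts divisors in $E$ meeting $E_{i_0}$; (d) the factor associated to the divisor $i_0$ migrates from $(1-qu_{i_0})^{k_{i_0}-1}$ in the $i\notin P\cup E$ product to $(q-u_{i_0})^{k_{i_0}-1}$ in the $i\in E'$ product, contributing $(q-1)^{k_{i_0}-1}/(1-q)^{k_{i_0}-1}=(-1)^{k_{i_0}-1}$ at $u_{i_0}=1$; (e) for each edge $\sigma\ni i_0$ with other endpoint $j$, the $\sigma$-factor in the product over $\sigma$ gives a ratio $u_j$ if $j\notin E$ and $u_j/q$ if $j\in E$, for a total contribution $q^{-n_E(i_0)}\prod_{i\neq i_0}u_i^{a_{i,i_0}}$. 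Multiplying the five contributions, the signs combine to $(-1)^{2k_{i_0}-1}=-1$, the powers $q^{\pm n_E(i_0)}$ cancel, and the monomials $\prod u_i^{\pm a_{i,i_0}}$ cancel, leaving the total ratio equal to $-1$, which is exactly what is needed for each pair to cancel.

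The main obstacle is carrying out (e) cleanly: one has to substitute $u_{i_0}=1$ into the two possible forms of the $\sigma$-factor depending on whether the other endpoint $j$ lies in $E$ or not, and observe that the $q^{-1}$ that appears when $\mu_{i_0}(E')=\mu_j(E')=1$ is precisely what produces the $q^{-n_E(i_0)}$ needed to kill the $q^{n_E(i_0)}$ coming from $\Delta$. The underlying combinatorial identity is $\prod_{\sigma\ni i_0}u_{j(\sigma)}=\prod_{i\neq i_0}u_i^{a_{i,i_0}}$, which holds because the exceptional divisors of a plane curve resolution meet transversally, so each neighbour $E_i$ of $E_{i_0}$ in the dual graph contributes exactly one edge incident to $i_0$.
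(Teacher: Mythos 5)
Your proposal is correct and follows essentially the same route as the paper: for each $\beta\in E(P)$ one pairs the summand of $E$ with that of $E\cup\{\beta\}$ and checks they cancel at $u_\beta=1$, the key bookkeeping identity being $\Delta(E\cup\{\beta\})-\Delta(E)=\sum_{\sigma\ni\beta}\mu_{j(\sigma)}(E)$, which is exactly your cancellation of $q^{\pm n_E(\beta)}$. Your ratio computation, including the sign count $(-1)^{2k_\beta-1}=-1$ and the monomial identity $\prod_{\sigma\ni\beta}u_{j(\sigma)}=\prod_{i\neq\beta}u_i^{a_{i\beta}}$, matches the paper's term-by-term comparison.
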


The proof of this lemma can be found in the Appendix.

The next lemma explains the relation of the function $\widetilde{H}_{P}(u_1,\ldots,u_s)$ (which is a modification of  the function $A_{K}(u)$) to the coefficients $c_{K}(n)$ defined above. It is the main technical instrument in the study of the cancellations.

\begin{lemma}
\label{L8}
$$\sum_{n}u^n\sum_{E\subset E(P)}q^{-\sum_{i\in E}n_i-\Delta(E)-\sum_{i\in E}a_{ii}-|E|}q^{|K_0\cap E|}\times c_{P\cup E}(n_i+\sum a_{ij}\mu_j(E))=$$
$$(-1)^{|P|}\prod_{i\in P}[(1-qu_i)^{k_i-p_i-1}(1-u_i)^{p_i-1}]\cdot{1\over \prod_{i\in E(P)}(1-u_i)}\widetilde{H}_{P}(u_1,\ldots,u_s).$$
\end{lemma}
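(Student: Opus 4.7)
The strategy is to rewrite the inner sum on the left-hand side as a specialization of the generating function $A_K(u)$ computed in Lemma 5, and then to match the resulting expression term by term with the definition of $\widetilde{H}_P$.

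First I would shift the index of summation by $m_i = n_i + \sum_j a_{ij}\mu_j(E)$. Since $c_{P\cup E}$ has generating function $A_{P\cup E}$, this converts the inner sum into
\[
\sum_n u^n q^{-\sum_{i\in E}n_i} c_{P\cup E}(n+a\mu(E))
 = u^{-a\mu(E)}\, q^{\sum_{i,j\in E} a_{ij}}\, A_{P\cup E}(v),
\]
where $v_i = u_i q^{-\mu_i(E)}$. Using $\sum_{i,j\in E}a_{ij}=\sum_{i\in E}a_{ii}+2\Delta(E)$, the explicit prefactor $q^{-\Delta(E)-\sum_{i\in E}a_{ii}-|E|+|K_0\cap E|}$ combines with the shift-induced $q^{\sum_{i,j\in E}a_{ij}}$ to give $q^{\Delta(E)-|E|+|K_0\cap E|}$.

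Next I would substitute Lemma 5 into the resulting expression for $A_{P\cup E}(v)$. Because $P$ is proper everywhere and $E\subset E(P)$, the index sets $P$ and $E$ are disjoint as collections of divisors, so $|P\cup E|=|P|+|K_0\cap E|$ and the product over $i$ splits according to whether $i\in P$, $i\in E$, or $i\notin P\cup E$. For $i\in E$ the substitution yields $(1-v_iq)=(1-u_i)$ and $(1-v_i)=q^{-1}(q-u_i)$, producing an extra factor $q^{-(k_i-1)}$ per $i\in E$; summing over $i\in E$ contributes $q^{|E|-|K_0\cap E|}$, which combines with the $q$-power from the previous step to leave precisely $q^{\Delta(E)}$. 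Each edge factor $(1-qv_{i(\sigma)}-qv_{j(\sigma)}+qv_{i(\sigma)}v_{j(\sigma)})$ transforms directly into $(1-q^{1-\mu_{i(\sigma)}(E)}u_{i(\sigma)}-q^{1-\mu_{j(\sigma)}(E)}u_{j(\sigma)}+q^{1-\mu_{i(\sigma)}(E)-\mu_{j(\sigma)}(E)}u_{i(\sigma)}u_{j(\sigma)})$, matching $\widetilde{H}_P$ edge for edge.

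Finally I would pull out the $E$-independent factors. The sign $(-1)^{|P|}$ together with $\prod_{i\in P}(1-u_iq)^{k_i-p_i-1}(1-u_i)^{p_i-1}$ is constant in $E$ and comes out as the first factor on the right-hand side. For each $i\in E(P)$ a uniform $(1-u_i)^{-1}$ appears: from $(1-v_iq)^{-1}=(1-u_i)^{-1}$ when $i\in E$ (since $|\overline{K}\cap E_i|=0$) and from $(1-v_i)^{-1}=(1-u_i)^{-1}$ when $i\in E(P)\setminus E$ (since $|K\cap E_i|=0$). Extracting $\prod_{i\in E(P)}(1-u_i)^{-1}$, what remains inside the sum over $E\subset E(P)$ matches the definition of $\widetilde{H}_P(u)$ term for term, completing the identity. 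The main obstacle is the $q$-exponent bookkeeping; the key accident is that three separate sources of $q$-powers—the explicit prefactor, the shift identity $\sum_{i,j\in E}a_{ij}=\sum_{i\in E}a_{ii}+2\Delta(E)$, and the substitution $(1-v_i)^{k_i-1}=q^{-(k_i-1)}(q-u_i)^{k_i-1}$ for $i\in E$—collapse cleanly into $q^{\Delta(E)}$.
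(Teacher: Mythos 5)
Your proof is correct and follows essentially the same route as the paper: shift the summation index to express the inner sum as $\prod u_i^{-\sum a_{ij}\mu_j(E)}\,q^{\Delta(E)+|K_0\cap E|-|E|}A_{P\cup E}(u_iq^{-\mu_i(E)})$, substitute Lemma 5, split the product according to $i\in P$, $i\in E$, $i\notin P\cup E$, and collect the $q$-powers into $q^{\Delta(E)}$. The bookkeeping matches the paper's computation step for step (and you even correct a small typo in the paper's final line, which writes $\prod_{i\notin E}$ where $\prod_{i\notin(P\cup E)}$ is meant).
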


The proof of this lemma can be found in the Appendix.

\begin{definition}
For a proper everywhere set $P$ define the quantities $d_{P}(n)$ by the equation
\begin{equation}
H_{P}(u)=\sum_{n}d_{P}(n)u^{n}d_{P}(n)={\prod_{i\in P}[(1-qu_i)^{k_i-p_i-1}(1-u_i)^{p_i-1}]\over \prod_{i\in E(P)}(1-u_i)}\widetilde{H}_{P}(u_1,\ldots,u_s).
\end{equation}
\end{definition}

Remark that by Lemma \ref{L7} the function $H_{P}(u)$ is polynomial in $u$, so we have only finite number of non-zero coefficients $d_{P}(n)$.

\bigskip
Combining the statements of Lemma \ref{L6} and Lemma \ref{L8}, we get the following result. 

\begin{theorem}
\label{T3}
Then
$$\overline{P}_{g}(t_1,\ldots,t_r)=\sum_{P\in\mathcal{P}}(-1)^{|P|}q^{|P|}t_{P}\times \sum_{n}d_{P}(n)t^{Mn}q^{F(n)-\sum n_i}.$$
\end{theorem}

\begin{proof}
From Lemma \ref{L6} we have
$$\overline{P}(t)=\sum_{n_1}t^{Mn_1}q^{F(n_1)-\sum{n}_i}\sum_{K\subset K_0}t_{K}q^{|K|}c_{K}(n_1)=$$
$$\sum_{P\in \mathcal{P}}q^{|P|}t_{P}\sum_{n_1}t^{Mn_1}q^{F(n_1)-\sum{n}_i}\sum_{E\subset E(P)}t_{E}q^{|K_0\cap E|}c_{P\cup E}(n_1).$$
Let us collect the coefficient at $t^{Mn}$. We have
$$Mn_1+\sum \mu_j(E)=Mn,\,\,\,\, n_1=n+\sum a_{ij}\mu_{j}(E).$$
and
$$(\overline{F}(n)-\sum n_i)-(\overline{F}(n_1)-\sum n_{1i})={1\over 2}[-2\sum m_{ij}n_{i}a_{js}\mu_{j}(E)$$
$$-\sum m_{ij}a_{is}\mu_s(E)a_{jl}\mu_l(E)-\sum m_{ij} \chi(E_{i}^{\bullet})a_{js}\mu_s(E)+\sum a_{ij}\mu_j(E)].$$
Remark that $$\sum_{i\neq j}a_{ij}=2-\chi(E_j^{\bullet}),$$ hence
$$(\overline{F}(n)-\sum n_i)-(\overline{F}(n_1)-\sum n_{1i})=\sum_{i\in E}n_i+\Delta(E)+\sum_{i\in E}a_{ii}+|E|.$$
Thus
$$\overline{P}(t)=\sum_{P\in\mathcal{P}}q^{|P|}t_{P}\sum_{n}t^{Mn}q^{F(n)-\sum n_i}\sum_{E\subset E(P)}q^{-\sum_{i\in E}n_i-\Delta(E)-\sum_{i\in K}a_{ii}-|E|}$$
$$\times q^{|K_0\cap E|}c_{P\cup E}(n+\sum a_{ij}\mu_j(E)).$$
Now we apply Lemma \ref{L8}.

\end{proof}

\begin{corollary}
The power series $\overline{P}_{g}(t_1,\ldots,t_r)$ is a polynomial. 
\end{corollary}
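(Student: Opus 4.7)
My plan is to combine Theorem~3 with Lemma~7 to show that each ingredient in the stated formula for $\overline{P}_g$ produces only finitely many nonzero monomials. Since the outer index set $\mathcal{P}$ is finite (there are only finitely many subsets of $K_0$), it suffices to show that for every proper-everywhere $P$, the inner sum $\sum_n d_P(n) t^{Mn} q^{F(n)-\sum n_i}$ has finite support. This reduces the corollary to a single claim: the generating function $H_P(u_1,\ldots,u_s)$ is in fact a \emph{polynomial} in $u_1,\ldots,u_s$.

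To verify this, I would analyse the formula
$$H_P(u) = \frac{\prod_{i\in P}(1-qu_i)^{k_i-p_i-1}(1-u_i)^{p_i-1}}{\prod_{i\in E(P)}(1-u_i)}\,\widetilde{H}_P(u_1,\ldots,u_s)$$
factor by factor. First, since $P\in\mathcal{P}$ is proper everywhere, for each $i\in P$ we have $1\le p_i\le k_i-1$, so both exponents $k_i-p_i-1$ and $p_i-1$ are non-negative and the numerator $\prod_{i\in P}(1-qu_i)^{k_i-p_i-1}(1-u_i)^{p_i-1}$ is an honest polynomial. Second, by Lemma~7 the factor $\widetilde{H}_P$ is divisible by $\prod_{i\in E(P)}(1-u_i)$, so the denominator cancels and the resulting quotient is again a polynomial. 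Combining, $H_P(u)$ is a polynomial in $u_1,\ldots,u_s$, whence $d_P(n)=0$ for all but finitely many multi-indices $n\in\mathbb{Z}_{\ge 0}^s$.

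Substituting this finiteness into the formula of Theorem~3, the expression
$$\overline{P}_{g}(t_1,\ldots,t_r)=\sum_{P\in\mathcal{P}}(-1)^{|P|}q^{|P|}t_{P}\sum_{n}d_{P}(n)\,t^{Mn}q^{F(n)-\sum n_i}$$
becomes a finite sum of monomials in $t_1,\ldots,t_r$ (with coefficients in $\mathbb{Z}[q,q^{-1}]$), which is exactly the content of the corollary. I do not expect any real obstacle: the genuinely combinatorial cancellation has already been packaged into Lemma~7, and the non-negativity of exponents used above is a tautological consequence of the definition of proper-everywhere. A minor bookkeeping point worth mentioning is that $t^{Mn}$ must produce non-negative integer exponents of the $t_k$; this is automatic because $\overline{P}_g=P_g\cdot\prod_k(1-qt_k)$ is by construction a formal power series in $t_1,\ldots,t_r$, so any cancellations among the monomials produced above can only yield non-negative integer powers.
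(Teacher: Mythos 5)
Your argument is correct and is exactly the paper's intended (and unwritten) proof: the corollary is an immediate consequence of Theorem~3 together with Lemma~7 and the finiteness of $\mathcal{P}$. The only nitpick is that $\widetilde{H}_P$, and hence $H_P$, is a \emph{Laurent} polynomial rather than a polynomial (because of the factors $u_i^{-\sum_j a_{ij}\mu_j}$), but this does not affect your conclusion, since finite support of $d_P(n)$ is all that is needed.
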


\subsection{The algorithm }

If every line $E_i$ is intersected by the one component of the strict transform, any proper everywhere set should be empty.
Therefore we get the following statement as a corollary of Theorem \ref{T3}. 

\begin{lemma}
\label{L9}
Suppose that each divisor $E_i$ is intersected by exactly one component of the strict transform of the curve.
Then the reduced motivic Poincar\'e series can be computed using the following algorithm.

1. Consider the polynomial
$$A(u_1,\ldots,u_r)=\prod_{\sigma}(1-qu_{i(\sigma)}-qu_{j(\sigma)}+qu_{i(\sigma)}u_{j(\sigma)}).$$

2. Consider the Laurent polynomial
$$\widetilde{H}(u_1,\ldots,u_t)=\sum_{K\subset K_0}(-1)^{|K|}q^{\Delta(K)}\prod u_{i}^{-\sum a_{ij}\mu_j}\cdot A(u_1q^{-\mu_1(K)},\ldots,u_rq^{-\mu_r(K)}).$$

3. This polynomial is divisible by $\prod (1-u_i)$. Let
$$H(u_1,\ldots,u_r)={\widetilde{H}(u_1,\ldots,u_r)\over \prod_{i=1}^{r}(1-u_i)}.$$

4. Expand this polynomial:
$$H(u_1,\ldots,u_r)=\sum d_{\underline{n}}u^{\underline{n}},$$
and now
$$\overline{P}_{g}(t_1,\ldots,t_r)=\sum d_{\underline{n}}t^{M\underline{n}}q^{F(\underline{n})-\sum n_{i}}.$$
\end{lemma}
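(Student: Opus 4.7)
The plan is to obtain Lemma 9 as an immediate specialization of Theorem 3, once the hypothesis is used to collapse the outer sum. First I would observe that under the assumption each divisor $E_i$ is intersected by exactly one component of the strict transform, so $k_i = |K_0 \cap E_i| = 1$ for every $i$. The only proper subset of $K_0 \cap E_i$ is then the empty set, which forces $P \cap E_i = \emptyset$ for all $i$ whenever $P$ is proper everywhere. Hence $\mathcal{P} = \{\emptyset\}$, and the sum over $P \in \mathcal{P}$ in Theorem 3 retains only the single term $P = \emptyset$. For this $P$ one has $E(P) = \{1,\ldots,s\}$, and the bijection between divisors and their unique intersection points with the strict transform identifies the summation index $E \subset E(P)$ in the definition of $\widetilde{H}_P$ with the index $K \subset K_0$ appearing in step 2 of the algorithm.

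Next I would substitute $P = \emptyset$ and $k_i = 1$ into the explicit formula for $\widetilde{H}_P$ from Theorem 3. The factors $(q - u_i)^{k_i - 1}$ and $(1 - qu_i)^{k_i - 1}$ both degenerate to $1$, the sign reduces to $(-1)^{|K_0 \cap E|} = (-1)^{|E|}$, and the final product over intersection points $\sigma$ is, by direct inspection, equal to $A(u_1 q^{-\mu_1(E)},\ldots,u_r q^{-\mu_r(E)})$ in the notation of step 1. Term by term this reproduces the Laurent polynomial $\widetilde{H}(u_1,\ldots,u_r)$ defined in step 2 of the algorithm.

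Finally I would handle steps 3 and 4 together. With $P = \emptyset$ the prefactor $\prod_{i \in P}[\,\cdot\,]$ in the definition of $H_P$ is empty and $E(P) = \{1,\ldots,s\}$, so Theorem 3 reads
$$H_{\emptyset}(u) = \frac{\widetilde{H}_{\emptyset}(u)}{\prod_i (1 - u_i)},$$
and the divisibility claimed in step 3 is precisely Lemma 7 applied to $P = \emptyset$. Plugging the expansion $H_{\emptyset}(u) = \sum d_{\underline{n}} u^{\underline{n}}$ into the statement of Theorem 3 (where now only $P = \emptyset$ survives and $t_P = 1$, $(-1)^{|P|} q^{|P|} = 1$) yields the formula of step 4. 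The main piece of bookkeeping, which I expect to be the only nontrivial point, is verifying that the identification between intersection points in $K_0$ and divisors in $E(\emptyset)$ respects the signs $(-1)^{|K_0 \cap E|}$, the exponents of $q$ coming from $\Delta(E)$, and the monomial weights $u_i^{-\sum a_{ij} \mu_j}$; once this identification is in place, no further computation is needed.
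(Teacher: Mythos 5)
Your proposal is correct and is precisely the paper's argument: the paper derives Lemma 9 from Theorem 3 by noting that when every $E_i$ meets exactly one component of the strict transform ($k_i=1$), the only proper everywhere set is $P=\emptyset$, so only that term survives, and the remaining bookkeeping (the trivialization of $(q-u_i)^{k_i-1}$ and $(1-qu_i)^{k_i-1}$, the identification of $E\subset E(\emptyset)$ with $K\subset K_0$, and the divisibility from Lemma 7) is exactly as you describe. The paper states this in one line; your write-up simply makes the specialization explicit.
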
 





\section{Examples}

\subsection{One divisor}

We consider the singularity $$x^{k_0}-y^{k_0}=0,$$ which is geometrically a union of $k_0$ pairwise transversal lines.
Its minimal resolution has one divisor and $k_0$ components of the strict transform intersecting it.  
In particular, for $k_0=1$ we get a non-singular case considered above.
For $0<k<k_0$ let the numbers $c_k(n)$ be defined by the equation
$$A_k(u)=\sum_{n=0}^{\infty}u^{n}c_k(n)=(1-uq)^{k_0-k-1}(1-u)^{k-1},$$
and for $k=0$ let the numbers $c_0(n)$ be defined by the equation
$$A_0(u)=\sum_{n=0}^{\infty}u^{n}c_{0}(n)={(1-uq)^{k_0-1}-u(u-q)^{k_0-1}\over 1-u}.$$
The polynomials $A_k(u)$ have degree $k_0-2$ for $k>0$, $A_0(u)$ has degree $k_0-1$, so we have a finite number of non-zero $c_k(n)$.

From the Theorem \ref{T3} we conclude that
$$\overline{P}_{g}(t_1,\ldots,t_{k_0})=\sum_{K\subset_{\neq} K_0}(-1)^{|K|}q^{|K|}t_{K} \sum_{n=0}^{\infty}c_{|K|}(n)(t_1\ldots t_{k_0})^{n}q^{n(n+1)\over 2}.$$

For example, if $k_0=2$, $$A_1(u)=1, A_0(u)={1-uq-u(u-q)\over 1-u}=1+u,$$
so
$$\overline{P}_{g}(t_1,t_2)=1-qt_1-qt_2+qt_1t_2.$$

If $k_0=3$, $$A_1(u)=1-qu, A_2(u)=1-u, A_0(u)=1+(1-2q-q^2)u+u^2,$$
so
$$\overline{P}_{g}(t_1,t_2,t_3)=1-q(t_1+t_2+t_3)+q^2(t_1t_2+t_1t_3+t_2t_3)+q(1-2q-q^2)t_1t_2t_3+$$
$$q^3t_1t_2t_3(t_1+t_2+t_3)-q^3t_1t_2t_3(t_1t_2+t_1t_3+t_2t_3)+q^3t_1^2t_2^2t_3^2.$$
This answer can be rewritten as
$$\overline{P}_{g}(t_1,t_2,t_2)=(1-qt_1)(1-qt_2)(1-qt_3)-q^3t_1t_2t_3(1-t_1)(1-t_2)(1-t_3)+q(1-q)^2t_1t_2t_3.$$

\subsection{Two divisors}

Suppose that the second divisor is intersected by two components of the strict transform, and the first one by one component. This corresponds to the singularity
$$x\cdot (y-x^2)\cdot (y+x^2)=0.$$
The matrix $M$ is equal to
$$M=\left(
\begin{matrix}
1 &1\\
1 &2\\
\end{matrix}\right),
$$
$$\chi(E_1^{\bullet})=\chi(E_2^{\bullet})=1,$$
so
$$F(n_1,n_2)={1\over 2}(n_1^2+2n_1n_2+2n_2^2+2n_1+3n_2).$$
If $P=\emptyset$, we get
$$\widetilde{H}_{\emptyset}(u_1,u_2)=(1-qu_1-qu_2+qu_1u_2)(1-qu_2)-(1-u_1-qu_2+u_1u_2)(1-qu_2)u_1^2u_2^{-1}$$
$$+(1-qu_1-u_2+u_1u_2)(q-u_2)u_1^{-1}u_2-q(1-u_1-u_2+q^{-1}u_1u_2)(1-qu_2)u_1=$$
$${1\over u_1u_2}(1-u_1)(1-u_2)(-u_1^3 + u_1u_2 + u_1^2u_2 - qu_1^2u_2 - q^2u_1^2u_2 + 
      qu_1^3u_2$$
      $$ + qu_2^2 + u_1u_2^2 - qu_1u_2^2 - q^2u_1u_2^2 + 
      u_1^2u_2^2 - u_2^3),$$
if $P$ is one point on the second divisor, we get
$$\widetilde{H}_{pt}(u_1,u_2)=(1-qu_1-qu_2+qu_1u_2)-(1-u_1-qu_2+u_2)u_1^2u_2^{-1}=$$
$$-{1\over u_2}(1-u_1)(u_1^2 - u_2 - u_1u_2 + qu_1u_2 - u_1^2u_2 + qu_2^2).$$
Finally we get the following answer ($t_0$ corresponds to the first divisor):
$$\overline{P}_{g}(t_0,t_1,t_2)=1 - qt_0 - qt_1 + q^2t_0t_1 - qt_2 + q^2t_0t_2 + q^2t_1t_2 + 
    qt_0t_1t_2 - q^2t_0t_1t_2 - q^3t_0t_1t_2$$
    $$ - q^2t_0t_1^2t_2 + q^3t_0t_1^2t_2
    - q^2t_0t_1t_2^2 + 
    q^3t_0t_1t_2^2 + q^2t_0t_1^2t_2^2 - q^3t_0t_1^2t_2^2 - 
    q^4t_0t_1^2t_2^2 + q^4t_0^2t_1^2t_2^2$$
    $$ + 
    q^4t_0t_1^3t_2^2 - q^4t_0^2t_1^3t_2^2 + 
    q^4t_0 t_1^2t_2^3 - q^4t_0^2t_1^2t_2^3 - 
    q^4t_0t_1^3t_2^3 + q^4t_0^2t_1^3t_2^3.$$
  This answer can be rewritten as
  $$\overline{P}_{g}(t_0,t_1,t_2)=(1-qt_0)(1-qt_1)(1-qt_2)-q^4t_0t_1^2t_2^2(1-t_0)(1-t_1)(1-t_2)$$
  $$+(1-q)qt_0t_1t_2(1-qt_1-qt_2+qt_1t_2).$$
  If $q=1$, we get the known Alexander polynomial:        
$$\overline{P}_{g}(t_0,t_1,t_2;q=1)=(1-t_0)(1-t_1)(1-t_2)(1-t_0t_1^2t_2^2).$$
If $t_2=1$, we get the known answer for $A_1$ singularity:
$$\overline{P}_{g}(t_0,t_1,1)=(1-q)(1-qt_0-qt_1+qt_0t_1).$$
If $t_0=1$, we get the answer for $A_3$ singularity:
$$\overline{P}_{g}(1,t_1,t_2)=(1-q)(1 - qt_1 - qt_2 + qt_1t_2 + q^2t_1t_2 - q^2t_1^2 t_2 - 
      q^2t_1t_2^2 + q^2t_1^2t_2^2),$$
so
$$\overline{P}_{g}^{A_3}(t_1,t_2)=(1-qt_1)(1-qt_2)+qt_1t_2(1-qt_1-qt_2+qt_1t_2)=$$
$$(1-qt_1)(1-qt_2)+q^2t_1t_2(1-t_1)(1-t_2)+(1-q)qt_1t_2.$$

This answer agrees with the general answer for the singularities of type $A_{2n-1}$ in the section 7.5.

\subsection{Three divisors}

For simplicity we assume that each divisor is intersected by one component of the strict transform.
This corresponds to the singularity $$x\cdot y\cdot (x^2-y^3)=0.$$
Matrix $M$ is equal to
$$M=\left(
\begin{matrix}
1 &1 &2\\
1 &2 &3\\
2 &3 &6\\	
\end{matrix}\right),
$$
$$\chi(E_1^{\bullet})=\chi(E_2^{\bullet})=1, \chi(E_3^{\bullet})=0,$$
so
$$F(n_1,n_2,n_2)={1\over 2}(n_{1}^{2}+2n_{2}^{2}+6n_{3}^{2}+2n_{1}n_{2}+4n_{1}n_{3}+6n_{2}n_{3}+n_{1}+2n_{2}+4n_{3}).$$
Now 
$$A(u_1,u_2,u_3)=(1-qu_1-qu_3+qu_1u_3)(1-qu_2-qu_3+qu_2u_3),$$
so
$$E(u_1,u_2,u_3)={1\over u_1u_2u_3^2}( {u_2}^{3}u_3u_1-{u_1}^{3}{u_3}^{2}q+{u_1}^{4}u_3u_2-{u_1}^{2}{u_2}^{2}{u_3}^{2}-{u_2}^{2}{u_3}^{2}u_1+$$
$${u_1}^{4}{u_2}^{3}u_3-{u_3}^{3}{u_1}^{2}q-{u_1}^{3}u_2{u_3}^{2}+{u_1}^{3}{u_2}^{3}u_3+{u_1}^{2}{u_2}^{3}u_3-{u_3}^{3}qu_2-$$
$${u_1}^{3}{u_2}^{2}{u_3}^{2}-{u_3}^{3}u_1q-{u_2}^{2}{u_3}^{2}q-{u_1}^{2}u_2{u_3}^{2}-{u_3}^{2}u_1u_2+{u_2}^{2}{u_1}^{4}u_3-{u_1}^{3}{u_2}^{3}qu_3+$$
$${u_2}^{2}{u_3}^{2}{u_1}^{2}q-{u_1}^{4}u_3{u_2}^{2}q-{u_1}^{4}{u_3}^{2}u_2q-{u_2}^{3}{u_3}^{2}u_1q-
{u_2}^{3}u_3{u_1}^{2}q+{u_3}^{3}u_1{q}^{2}u_2+$$
$${u_2}^{2}{u_3}^{2}u_1{q}^{2}+{u_1}^{3}{u_2}^{2}u_3{q}^{2}+{u_1}^{3}{u_3}^{2}u_2{q}^{2}-{u_1}^{4}{u_2}^{3}+{u_1}^{2}{u_3}^{3}+{u_3}^{3}u_1+{u_3}^{2}{u_1}^{2}u_2q+$$
$${u_1}^{3}{u_3}^{3}+{u_3}^{3}{u_2}^{2}+{u_3}^{3}u_2+{u_3}^{3}-{u_3}^{4}),$$
and
$$\overline{P}_{g}(t_1,t_2,t_3)=1-t_{{3}}q+{t_{{1}}}^{2}{t_{{2}}}^{3}{t_{{3}}}^{7}{q}^{7}+{t_{{1}}}^{2}{t_{{2}}}^{2}{t_{{3}}}^{5}{q}^{5}\\
\mbox{}+t_{{1}}t_{{2}}{t_{{3}}}^{3}{q}^{3}+t_{{1}}{t_{{2}}}^{2}{t_{{3}}}^{4}{q}^{4}-{t_{{1}}}^{2}{t_{{2}}}^{4}{t_{{3}}}^{7}{q}^{7}+$$
$$t_{{2}}t_{{3}}{q}^{2}-t_{{1}}t_{{2}}{t_{{3}}}^{3}{q}^{2}+t_{{1}}t_{{2}}{q}^{2}\\
\mbox{}-t_{{1}}{t_{{2}}}^{2}{t_{{3}}}^{4}{q}^{3}-{t_{{1}}}^{2}{t_{{2}}}^{2}{t_{{3}}}^{5}{q}^{4}\\
\mbox{}-t_{{1}}{t_{{2}}}^{2}{t_{{3}}}^{2}{q}^{2}-{t_{{1}}}^{2}{t_{{2}}}^{3}{t_{{3}}}^{5}{q}^{5}-$$
$${t_{{1}}}^{3}{t_{{2}}}^{3}{t_{{3}}}^{7}{q}^{7}-{t_{{1}}}^{3}{t_{{2}}}^{4}{t_{{3}}}^{6}{q}^{7}\\
\mbox{}+{t_{{1}}}^{2}{t_{{2}}}^{3}{t_{{3}}}^{5}{q}^{4}+{t_{{1}}}^{2}{t_{{2}}}^{2}{t_{{3}}}^{4}{q}^{3}\\
\mbox{}+{t_{{1}}}^{2}{t_{{2}}}^{2}{t_{{3}}}^{3}{q}^{4}-{t_{{1}}}^{2}{t_{{2}}}^{2}{t_{{3}}}^{3}{q}^{3}+$$
$${t_{{1}}}^{2}{t_{{2}}}^{3}{t_{{3}}}^{4}{q}^{5}+{t_{{1}}}^{2}{t_{{2}}}^{4}{t_{{3}}}^{6}{q}^{7}\\
\mbox{}+t_{{1}}{t_{{2}}}^{2}{t_{{3}}}^{2}{q}^{3}-{t_{{1}}}^{2}{t_{{2}}}^{3}{t_{{3}}}^{6}{q}^{7}\\
\mbox{}-{t_{{1}}}^{2}{t_{{2}}}^{2}{t_{{3}}}^{4}{q}^{5}-t_{{1}}{t_{{2}}}^{2}{t_{{3}}}^{3}{q}^{4}-$$
$$t_{{1}}t_{{2}}t_{{3}}{q}^{3}+t_{{1}}{t_{{2}}}^{2}{t_{{3}}}^{3}{q}^{2}-t_{{2}}q\\
\mbox{}+t_{{1}}t_{{3}}{q}^{2}-t_{{1}}t_{{2}}{t_{{3}}}^{2}{q}^{2}+{t_{{1}}}^{3}{t_{{2}}}^{4}{t_{{3}}}^{7}{q}^{7}+$$
$$t_{{1}}t_{{2}}{t_{{3}}}^{2}q-t_{{1}}q-{t_{{1}}}^{2}{t_{{2}}}^{3}{t_{{3}}}^{4}{q}^{4}\\
\mbox{}+{t_{{1}}}^{3}{t_{{2}}}^{3}{t_{{3}}}^{6}{q}^{7}.$$

It can be rewritten as

$$\overline{P}_{g}(t_1,t_2,t_3)=(1-t_{{1}}q)(1-t_{{2}}q)(1-t_{{3}}q)-{t_1}^2{t_2}^3{t_3}^6q^7(1-t_1)(1-t_2)(1-t_3)-$$
$$t_1t_2t_3^2q(q-1)(1-t_2q)(1-t_3q)-t_1^2t_2^2t_3^4q^4(q-1)(1-t_2)(1-t_3)-$$
$$t_1t_2^2t_3^3q^2(q-1)(1-t_1q)+t_1t_2^2t_3^4q^3(q-1)(1-t_1).$$

In this presentation the symmetry of $\overline{P}_g$ is clear, since every line in the right hand side is invariant under the change $t_i\leftrightarrow q^{-1}t_i^{-1}$.

If we set $q=1$, we get
$$\overline{P}_{g}(t_1,t_2,t_3,q=1)=(1-t_1^{2}t_2^{3}t_3^{6})(1-t_1)(1-t_2)(1-t_3).$$

If we consider only singularity of type $A_2$, we set
$t_1=t_2=1,t_3=t$, and
$$\overline{P}_{g}(1,1,t)=(1-q)^2(1-tq+t^2q),$$
so
$$P_{g}(1,1,t)={1-tq+t^2q\over 1-tq}=1+\sum_{k=2}^{\infty}t^{k}q^{k-1}.$$
This answer coincides with the one obtained in the section 2.3.


\section{Symmetry}

In this section we prove the symmetry property for the reduced motivic Poincar\'e series (Theorem \ref{T4}).
The strategy of the proof passes along the lines of the computation described in Lemma \ref{L6}: namely, we 
prove the symmetry property for the generating function $A_{K}(u)$ in Lemma \ref{L10}, deduce from it a certain
relations on its coefficients $c_{K}(n)$ in Lemma \ref{L11}. Since we can express the motivic Poincar\'e series in terms of $c_{K}(n)$, we can finish the proof by fitting this relations to the statement of Theorem \ref{T4}.

\begin{lemma}
\label{L10}
$$A_K({1\over qu_1},\ldots,{1\over qu_s})=q^{1-|K|}\prod_{i=1}^{s} u_i^{\chi(E_i^{\circ})}\cdot A_{\overline{K}}(u_1,\ldots,u_s).$$
\end{lemma}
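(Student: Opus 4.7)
The natural approach is to start from the closed-form expression for $A_K(u)$ established in Lemma 5 and substitute $u_i\mapsto 1/(qu_i)$ directly into it, comparing the result term by term with $A_{\overline{K}}(u)$.

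The key elementary identities are $1 - q\cdot \frac{1}{qu_i} = -(1-u_i)/u_i$ and $1 - \frac{1}{qu_i} = -(1-qu_i)/(qu_i)$. Applied to the factors in Lemma 5, these exchange the roles of the $(1-qu_i)$-factors and the $(1-u_i)$-factors, and their exponents $|\overline{K}\cap E_i|-1$ and $|K\cap E_i|-1$, which is exactly what is needed for producing $A_{\overline{K}}(u)$. For each edge factor we compute
\[
1 - \tfrac{1}{u_{i(\sigma)}} - \tfrac{1}{u_{j(\sigma)}} + \tfrac{1}{qu_{i(\sigma)}u_{j(\sigma)}} \;=\; \frac{1-qu_{i(\sigma)}-qu_{j(\sigma)}+qu_{i(\sigma)}u_{j(\sigma)}}{qu_{i(\sigma)}u_{j(\sigma)}},
\]
so the product over $\sigma$ reproduces itself up to the scalar $q^{-|I_0|}\prod_i u_i^{-d_i}$, where $d_i$ denotes the number of intersection points of $E_i$ with the other exceptional divisors.

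Collecting everything, I expect to obtain
\[
A_K\!\bigl(1/(qu_1),\ldots,1/(qu_s)\bigr) \;=\; (-1)^{\varepsilon}\, q^{s-|K|-|I_0|}\,\prod_i u_i^{2-k_i-d_i}\cdot A_{\overline{K}}(u_1,\ldots,u_s),
\]
where $k_i=|K_0\cap E_i|$ and $\varepsilon$ collects the various $(-1)$'s coming from the factors above together with the flip $(-1)^{|K|}\to (-1)^{|\overline{K}|}$; a routine check (using $\sum_i k_i = r = |K_0|$) shows $\varepsilon$ is even.

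Two geometric inputs then finish the argument. First, since $E_i\cong\mathbb{CP}^1$ and $E_i^\circ$ is obtained from $E_i$ by removing the $d_i$ nodes of $D$ lying on it and the $k_i$ points where the strict transform meets it, one has $\chi(E_i^\circ)=2-d_i-k_i$, so the $u_i$-exponent is exactly $\chi(E_i^\circ)$. Second, the dual graph of the exceptional divisor of an embedded resolution of a plane curve singularity is a tree, whose $s$ vertices and $|I_0|$ edges satisfy $s-|I_0|=1$, producing the desired $q^{1-|K|}$. The only real obstacle is bookkeeping, and the only conceptual inputs are the Euler characteristic of $E_i^\circ$ and the tree property of the resolution graph.
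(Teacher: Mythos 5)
Your proposal is correct and is essentially the paper's own proof: both substitute $u_i\mapsto 1/(qu_i)$ into the closed form of Lemma 5, observe that the node factors swap roles while the edge factor reproduces itself up to $(qu_{i(\sigma)}u_{j(\sigma)})^{-1}$, and then identify the resulting exponents via $\chi(E_i^{\circ})=2-d_i-k_i$ and $|I_0|=s-1$. The sign and power bookkeeping you outline matches the paper's computation exactly.
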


\begin{proof}
$$A_{K}({1\over qu})=(-1)^{|K|}\prod_{i}(1-{1\over u_i})^{|\overline{K}\cap E_i|-1}(1-{1\over u_iq})^{|K\cap E_i|-1}\prod_{\sigma}(1-{1\over u_{i(\sigma)}}-{1\over u_{j(\sigma)}}+{1\over qu_{i(\sigma)}u_{j(\sigma)}})=$$
$$A_{\overline{K}}(u)\prod_{i}u_{i}^{1-|\overline{K}\cap E_i|}u_i^{1-|K\cap E_i|}q^{1-|K\cap E_i|}\prod_{\sigma}(qu_{i(\sigma)}u_{j(\sigma)})^{-1}=$$
$$A_{\overline{K}}(u)q^{s-|K|-|I_0|}\prod u_i^{2-|K_0\cap E_i|+\chi(E_i^{\bullet})-2}.$$
It rests to note that $|I_0|=s-1$ and $\chi(E_{i}^{\circ})=\chi(E_i^{\bullet})-|K_0\cap E_i|.$
\end{proof}

\begin{lemma}
\label{L11}
$$c_{K}(n_1,\ldots,n_s)=q^{1-|K|+n}c_{\overline{K}}(-\chi(E_1^{\circ})-n_1,\ldots,-\chi(E_s^{\circ})-n_s),$$
where $n=\sum_{i=1}^{s} n_i$.
\end{lemma}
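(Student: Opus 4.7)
The plan is to derive Lemma 10 directly from Lemma 9 by extracting coefficients. Recall that by definition $A_K(u_1,\ldots,u_s) = \sum_{n} u^n c_K(n)$, where $u^n$ abbreviates $u_1^{n_1}\cdots u_s^{n_s}$ and the sum ranges over $n \in \mathbb{Z}_{\ge 0}^s$ (with $c_K(n) = 0$ outside the support, which is bounded since Lemma 5 shows $A_K$ is a polynomial).

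First I would substitute $u_i \mapsto 1/(qu_i)$ termwise to obtain
$$A_K\bigl(1/(qu_1),\ldots,1/(qu_s)\bigr) = \sum_{n} q^{-|n|}\,u^{-n}\, c_K(n),$$
where $|n| = \sum_i n_i$. Next I would expand the right-hand side of Lemma 9 similarly:
$$q^{1-|K|}\prod_{i=1}^{s} u_i^{\chi(E_i^{\circ})}\cdot A_{\overline{K}}(u_1,\ldots,u_s) = q^{1-|K|} \sum_{m} u^{m+\chi(E^{\circ})}\, c_{\overline{K}}(m),$$
where $\chi(E^{\circ})$ denotes the vector $(\chi(E_1^{\circ}),\ldots,\chi(E_s^{\circ}))$.

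Then I would match the coefficient of $u^{-n}$ on both sides. On the right, this forces $m = -n - \chi(E^{\circ})$, and the coefficient equality becomes
$$q^{-|n|}\, c_K(n) = q^{1-|K|}\, c_{\overline{K}}\bigl(-\chi(E_1^{\circ}) - n_1,\ldots,-\chi(E_s^{\circ}) - n_s\bigr).$$
Multiplying by $q^{|n|}$ yields precisely the identity claimed in Lemma 10.

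The entire argument is a mechanical coefficient extraction, so there is no real obstacle beyond bookkeeping. The only point that might merit a sentence of care is verifying that the substitution $u_i \mapsto 1/(qu_i)$ is legitimate: since $A_K$ is a polynomial (Lemma 5), both sides of Lemma 9, after substitution, are Laurent polynomials in $u_1,\ldots,u_s$ with coefficients in $\mathbb{Z}[q,q^{-1}]$, so comparing coefficients of monomials is unambiguous and the support conditions on $c_K$ and $c_{\overline{K}}$ are automatically compatible.
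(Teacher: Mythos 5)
Your proposal is correct and is essentially identical to the paper's own proof: the paper likewise substitutes $u_i\mapsto 1/(qu_i)$ into the generating-function definition of $A_K$, invokes the preceding lemma, and reads off the coefficient identity via the change of index $z_i=-\chi(E_i^{\circ})-n_i$. Your extra remark about both sides being Laurent polynomials (so that coefficient comparison is unambiguous) is a harmless and slightly more careful touch than the paper offers.
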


\begin{proof}
$$A_{K}({1\over qu_1},\ldots,{1\over qu_s})=\sum_{\underline{n}} c_{K}(n_1,\ldots,n_s)\underline{u}^{-\underline{n}}q^{-n}=q^{1-|K|}\prod u_i^{\chi(E_i^{\circ})}\sum_{\underline{z}} c_{\overline{K}}(z_1,\ldots,z_s)\underline{u}^{\underline{z}}.$$
We have $$z_i+\chi(E_i^{\circ})=-n_i,\,\,\, z_i=-\chi(E_i^{\circ})-n_i.$$
\end{proof}

\begin{theorem}
\label{T4}
Let $\mu_{\alpha}$ be the Milnor number of $C_{\alpha}$, and $(C_{\alpha}\circ C_{\beta})$ is the intersection index
of $C_{\alpha}\circ C_{\beta}$, $\mu(C)$ is the Milnor number of $C$. Let $l_{\alpha}=\mu_{\alpha}+\sum_{\beta\neq\alpha}(C_{\alpha}\circ C_{\beta})$ and $\delta(C)=(\mu(C)+r-1)/2.$
Then
$$\overline{P}_{g}({1\over qt_1},\ldots,{1\over qt_r})=q^{-\delta(C)}\prod_{\alpha}t_{\alpha}^{-l_{\alpha}}\cdot \overline{P}_g(t_1,\ldots,t_r).$$
\end{theorem}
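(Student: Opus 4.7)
The plan is to deduce the symmetry by substituting $t_\alpha\mapsto 1/(qt_\alpha)$ in the formula from Lemma~6,
\[
\overline{P}_g(t_1,\ldots,t_r)=\sum_{n}t^{Mn}q^{\overline F(n)-\sum n_i}\sum_{K\subset K_0}t_K\,q^{|K|}c_K(n),
\]
and then applying Lemma~11 termwise. The substitution turns the outer monomial into $q^{-\sum_\alpha w_{i(\alpha)}(n)}\prod_\alpha t_\alpha^{-w_{i(\alpha)}(n)}$, while the inner factor $t_Kq^{|K|}$ becomes $t_K^{-1}$. Lemma~11 rewrites $c_K(n)$ as $q^{1-|K|+\sum n_i}c_{\overline K}(n')$ with $n'_i=-\chi(E_i^\circ)-n_i$; after relabeling $K\to\overline K$ and using $t_{\overline K}^{-1}=\prod_\alpha t_\alpha^{-1}\cdot t_K$ together with $|\overline K|=r-|K|$, the inner sum becomes $\prod_\alpha t_\alpha^{-1}\cdot q^{1-r+\sum n_i}\sum_{K'}t_{K'}q^{|K'|}c_{K'}(n')$. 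Note that the $-\sum n_i$ of the original exponent now cancels with the $+\sum n_i$ contributed by Lemma~11.

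The heart of the argument is the change of variable $n=-\chi-n'$, where I write $\chi_i:=\chi(E_i^\circ)$, $k_i:=|K_0\cap E_i|$, $c_i:=\chi(E_i^\bullet)$, so $\chi-c=-k$. Using the symmetry of $M$, a direct quadratic expansion gives
\[
\overline F(n)-\sum_\alpha w_{i(\alpha)}(n)=\overline F(n')-\sum n'_i+C_1+\chi^T Mk,
\]
where $C_1=\tfrac12[\chi^T M\chi-\chi^T(Mc+\underline{1})]$ is a resolution-dependent scalar, and simultaneously $\prod_\alpha t_\alpha^{-w_{i(\alpha)}(n)}=\prod_\alpha t_\alpha^{(M\chi)_{i(\alpha)}}\cdot t^{Mn'}$. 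The crucial miracle is that all linear-in-$n'$ terms cancel, so the $n'$-sum folds back into $\overline{P}_g(t)$ and yields the intermediate factorization
\[
\overline{P}_g(1/(qt))=q^{1-r+C_1+\chi^T Mk}\,\prod_\alpha t_\alpha^{(M\chi)_{i(\alpha)}-1}\,\overline{P}_g(t).
\]

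To match the right-hand side of the theorem, it remains to identify $(M\chi)_{i(\alpha)}-1=-l_\alpha$ and $1-r+C_1+\chi^T Mk=-\delta(C)$. The first identity follows from the classical formula $(C_\alpha\cdot C_\beta)=M_{i(\alpha),i(\beta)}$ for $\alpha\neq\beta$ combined with an A'Campo-type expression for the branchwise Milnor number $\mu_\alpha$ in terms of resolution data; summing over $\alpha$ gives $\sum_\alpha l_\alpha=r-\chi^T Mk$, which together with $\sum_\alpha l_\alpha=2\delta(C)$ (stated in Property~4 of the introduction) and the classical embedded-resolution identity $C_1=\delta(C)-1$ delivers the second. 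The main obstacle is not any single step but the careful choreography of the bookkeeping in the middle paragraph: several large quadratic and linear expressions — coming from the expansion of $\overline F$, from the inner-sum relabeling $K\to\overline K$, and from the $q$-part of the substituted outer monomial — must conspire to cancel all $n'$-dependence except for the expected overall scalar and the $t^{Mn'}$ factor, leaving a clean copy of $\overline{P}_g(t)$.
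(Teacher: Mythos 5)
Your proposal is correct and follows essentially the same route as the paper: substitute $t_\alpha\mapsto 1/(qt_\alpha)$ into the Lemma~6 expansion, apply Lemma~11 with the relabeling $K\to\overline K$ and the change of variables $n'=-\chi(E^\circ)-n$, check that the quadratic form $\overline F$ reproduces itself up to a constant (the cancellation of linear terms via $\chi(E^\circ)+k=\chi(E^\bullet)$ is exactly the paper's computation), and identify the residual constants with $-\delta(C)$ and $-l_\alpha$ via A'Campo-type formulas. The only cosmetic difference is that you obtain $\overline F(-\chi(E^\circ))=\delta-1$ by summing the branchwise identities and invoking $\sum_\alpha l_\alpha=2\delta$, whereas the paper applies A'Campo's global formula directly; both reduce to the same classical resolution data.
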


The theorem follows from Lemma \ref{L11} describing the symmetry of the coefficients $c_{K}(n)$ and Lemma \ref{L6} describing $\overline{P}_g(t_1,\ldots,t_r)$ in terms of $c_{K}(n)$. The detailed proof is rather technical and can be found in the Appendix.

\begin{corollary}
The degree of the polynomial $\overline{P}_{g}(t_1,\ldots,t_r)$ with respect to the variable $t_i$ is equal to $l_i$.
The greatest monomial in it equals to $q^{\delta(C)}\prod_{i=1}^{r}t_i^{l_i}$.
\end{corollary}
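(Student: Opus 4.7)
The plan is to derive the corollary from the functional equation of Theorem~4 together with the polynomiality provided by Corollary~1; the only additional ingredient needed will be the evaluation $c_{0} = 1$ of the constant term.

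First I would write $\overline{P}_{g}(t_{1},\ldots,t_{r}) = \sum_{\alpha} c_{\alpha}\, t_{1}^{\alpha_{1}}\cdots t_{r}^{\alpha_{r}}$. By Corollary~1 this is a polynomial, so $c_{\alpha} = 0$ whenever some $\alpha_{i} < 0$. Substituting $t_{i}\mapsto 1/(qt_{i})$ into the identity of Theorem~4 and matching the coefficient of $\prod_{i} t_{i}^{-\alpha_{i}}$ on the two sides yields the coefficient-level symmetry
\begin{equation*}
c_{l-\alpha} = q^{\,\delta(C)-|\alpha|}\, c_{\alpha}, \qquad |\alpha| := \sum_{i}\alpha_{i},
\end{equation*}
which is the motivic analogue of the classical symmetry $\Delta(\underline{t}^{-1}) = \prod_{i} t_{i}^{-l_{i}}\,\Delta(\underline{t})$ of the Alexander polynomial.

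The degree bound is then immediate. If $\alpha_{i} > l_{i}$ for some $i$, then $l-\alpha$ has a negative coordinate, so $c_{l-\alpha} = 0$ by polynomiality, and the symmetry forces $c_{\alpha} = 0$. Hence $\deg_{t_{i}}\overline{P}_{g} \le l_{i}$ for every $i$. Applying the same identity at $\alpha = 0$ gives $c_{l} = q^{\delta(C)}\, c_{0}$, so once $c_{0} = 1$ is established the top monomial equals $q^{\delta(C)}\prod_{i} t_{i}^{l_{i}}$, which in particular upgrades each inequality $\deg_{t_{i}}\overline{P}_{g} \le l_{i}$ to an equality.

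To verify $c_{0} = 1$, I would evaluate the formula of Theorem~3 at $t_{1} = \cdots = t_{r} = 0$: in the expression $\overline{P}_{g} = \sum_{P\in\mathcal{P}}(-1)^{|P|} q^{|P|} t_{P} \sum_{n} d_{P}(n)\, t^{Mn} q^{F(n)-\sum n_{i}}$, any nonempty proper-everywhere set $P$ contributes a factor $t_{P}$ vanishing at $t=0$, and since $M = -A^{-1}$ is nondegenerate the condition $Mn = 0$ forces $n = 0$. Hence $c_{0} = d_{\emptyset}(0)$, the constant coefficient of $H_{\emptyset}(u) = \widetilde{H}_{\emptyset}(u)/\prod_{i}(1-u_{i})$, and one reads off from the explicit formula for $\widetilde{H}_{\emptyset}$ in Lemma~7 that $\widetilde{H}_{\emptyset}(0) = 1$. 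The main obstacle is precisely this last bookkeeping check: individual summands of $\widetilde{H}_{\emptyset}$ carry negative powers of some $u_{i}$, so the equality $\widetilde{H}_{\emptyset}(0)=1$ relies on the cancellations already established in Lemma~7 rather than on a termwise evaluation; once it is in hand, everything else is a formal consequence of the functional equation.
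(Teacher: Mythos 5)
Your argument is correct and is exactly the (implicit) proof the paper intends: the corollary is stated as a direct consequence of the functional equation of Theorem~4 combined with the polynomiality of $\overline{P}_{g}$, with the coefficient-level symmetry $c_{l-\alpha}=q^{\delta(C)-|\alpha|}c_{\alpha}$ killing all $c_{\alpha}$ with some $\alpha_i>l_i$ and sending the constant term to the top monomial. Your verification of $c_0=1$ via Theorem~3 is valid, though it can be obtained more cheaply by noting that $P_g$ has constant term $1$ (e.g.\ from the motivic integral or equation (\ref{motfil})), hence so does $\overline{P}_g=\prod_i(1-qt_i)P_g$.
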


\medskip

Alternative proof of the symmetry property for the motivic Poincar\'e series can be found in \cite{mozu},
where it is deduced from the theorem of Campillo, Delgado and Kiyek on the symmetry of the multi-variable 
Poincar\'e series of a plane curve singularity.

\section{Relation to the Heegaard-Floer knot homology}

\subsection{Heegaard-Floer homology}

In the series of articles (e.g. \cite{os2},\cite{os3},\cite{os4},\cite{os}, see also \cite{ras}) P. Ozsv\'ath and Z. Szab\'o constructed new powerful knot invariants, Heegaard-Floer knot (and link) homology. To each link $L=\cup_{i=1}^{r}K_i$ they assign 
the collection of homology groups $\widehat{HFL}_{d}(L,\underline{h})$, where $d$ is an integer and $\underline{h}$ belongs to some $r$-dimensional lattice. Their original description was based on the constructions from the symplectic topology, later (\cite{mos},\cite{most}) there were elaborated combinatorial models for them.
All of these homologies are invariants of the link $L$, and they have the following properties (\cite{os3}, \cite{most}).

First, they give a "categorification" of the Alexander polynomial of $L$: if $r=1$, then
$$\sum_{h}\chi(\widehat{HFL}_{*}(L,h))t^{h}=\Delta^{s}(t),$$
where $\Delta^{s}(t)=t^{-\deg \Delta/2}\Delta(t)$ is a symmetrized Alexander polynomial of $L$.
If $r>1$, then
$$\sum_{\underline{h}}\chi(\widehat{HFL}_{*}(L,h))\underline{t}^{h}=\prod_{i=1}^{r}(t_i^{1/2}-t_i^{-1/2})\cdot \Delta^{s}(t_1,\ldots,t_r).$$

Second, they have the symmetry extending the symmetry of the Alexander polynomial:
$$\widehat{HFL}_d(L, h)\cong \widehat{HFL}_{d-2H}(L,-h),$$
where $H=\sum_{i=1}^{r}h_i$.  

These properties are  similar to the ones of the polynomials $\overline{P}_{g}(t)$, and one could be interested in comparison of these objects. It turns out, that for knots (of course, $\overline{P}_{g}(t)$ is defined only for the algebraic ones) this comparison can be done.  

In \cite{os} for the relatively large class of knots, containing all algebraic knots, the following statement was proved. 

\begin{theorem}(\cite{os})
\label{T5}
Let the symmetrized Alexander polynomial have the form
$$\Delta^{s}(t)=(-1)^{k}+\sum_{i=1}^{k}(-1)^{k-i}(t^{n_i}+t^{-n_i})$$
for some integers $0<n_1<n_2<\ldots<n_k$. Let $n_{-j}=-n_{j},n_0=0$.
For $-k\le i\le k$ let us introduce the numbers $\delta_{i}$ by the formula
$$\delta_{i}=\begin{cases}
0, &\mbox{if}\,\,\,\, $i=k$\cr
\delta_{i+1}-2(n_{i+1}-n_i)+1, &\mbox{if}\,\,\, $k-i$\,\,\, \mbox{is odd}\cr
\delta_{i+1}-1, &\mbox{if}\,\,\, $k-i$>0\,\,\, \mbox{is even}.\cr 
\end{cases}$$

Then $\widehat{HFL}(K,j)=0$, if $j$ does not coincide with any $n_i$, and $\widehat{HFL}(K,n_i)=\mathbb{Z}$ belongs to the homological grading $\delta_i$.
\end{theorem}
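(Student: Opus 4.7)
The plan is to follow Ozsv\'ath and Szab\'o's original argument for L-space knots, the class to which all algebraic knots belong. The proof naturally breaks into three steps: a rank bound in each Alexander grading, fixing the top Maslov grading, and a recursive descent along a staircase complex.

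First I would establish that $\widehat{HFK}(K,j)$ has rank at most one in each Alexander grading. For an L-space knot some large positive surgery $S^{3}_{N}(K)$ is an L-space, and the large surgery formula then forces $CFK^{\infty}(K)$ to have "staircase" shape. A consequence is that $\mathrm{rk}\,\widehat{HFK}(K,j)$ equals $|a_{j}|$, where $a_{j}$ is the coefficient of $t^{j}$ in $\Delta^{s}(t)$. Under the hypothesis on $\Delta^{s}$, this gives exactly one $\mathbb{Z}$-summand at each $j=n_{i}$ and nothing elsewhere, which already proves the first half of the statement.

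Next I would pin down $\delta_{k}$. Since $n_{k}$ is the top Alexander grading and for L-space knots one has $\tau(K)=g(K)=n_{k}$, the Ozsv\'ath--Szab\'o formula relating $\tau$ to the Maslov grading of the top generator yields $\delta_{k}=0$. Then I would descend along the staircase. Its arrows alternate between two types, exactly matching the two cases of the formula: the long arrows, whose horizontal length is $n_{i+1}-n_{i}$, come from the $U$-action and change Maslov grading by $-2(n_{i+1}-n_{i})+1$, while the short "diagonal" arrows change it by $-1$. Translating this alternation into the prescribed recursion for $\delta_{i}$ is then immediate.

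The main obstacle is not combinatorial but foundational: rigorously identifying $CFK^{\infty}(K)$ with a staircase complex requires the full apparatus of Heegaard--Floer surgery exact triangles, correction terms for L-spaces, and the symplectic-geometric interpretation of the $\tau$-invariant. These lie well outside the scope of the present paper, which is why the statement is cited verbatim from \cite{os} rather than reproved here. Once the staircase identification is granted, everything that remains is elementary bookkeeping of Maslov gradings along its corners.
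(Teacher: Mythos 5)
The paper does not prove this statement at all: it is quoted verbatim from \cite{os} (Ozsv\'ath--Szab\'o, Theorem 1.2 of ``On knot Floer homology and lens space surgeries''), and the only trace of its proof in the paper is the remark in Section 7.3 that $\mathrm{rk}\, H^{*}(CFL^{-}(K)/U_1 CFL^{-}(K))=1$ for algebraic knots (via plumbing descriptions of $S^3_n(K)$ and \cite{plum}), together with the abstract Proposition of Section 7.4, which re-packages Lemmas 3.1 and 3.2 of \cite{os} --- precisely the homological-algebra core that forces the rank of $H^{*}(\mathcal{C}_k/\mathcal{C}_{k+1})$ to be at most one and pins down its degree. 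Your sketch follows the same route as the cited source and is consistent with that Proposition: the L-space/large-surgery input gives the staircase shape, $\mathrm{rk}\,\widehat{HFK}(K,j)=|a_j|$, and the alternating arrows produce the two cases of the recursion for $\delta_i$. Two small caveats: first, deriving $\delta_k=0$ from $\tau(K)=g(K)$ is mildly circular, since for L-space knots that equality is itself usually extracted from the staircase structure; the cleaner normalization is that the spectral sequence from $\widehat{HFK}$ converges to $\widehat{HF}(S^3)=\mathbb{Z}_{(0)}$, which forces the top generator into Maslov grading $0$. Second, you correctly flag that the full identification of $CFK^{\infty}$ with a staircase lies outside the present paper; since the author also treats this as an external citation, your level of detail is appropriate, and nothing in your outline would fail if expanded.
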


In what follows we will need more detailed algebraic structure of the Heegaard-Floer homology which can be described in the following way (\cite{os3}).

Consider the ring $$R=\mathbb{Z}[U_1,\ldots,U_r].$$
For every $r$-component link $L$ there exists a $\mathbb{Z}^r$-filtered chain complex
$CFL^{-}(S^3,L)$ of $R$-modules, whose filtered homotopy type is an invariant of the link $L$.
Filtrations naturally correspond to the components of the link $L$.
The operators $U_i$ lowers the homological grading by 2 and the filtration level by $\underline{1}$.
The homologies of the associated graded object are denoted as $HFL^{-}(S^{3},L)$. 
If one sets $U_1=U_2=\ldots=U_r=0$, he gets a new $\mathbb{Z}^r$-filtered chain complex of $\mathbb{Z}$-modules, which will be denoted as $\widehat{CFL}(L)$. The homology of the associated graded object are denoted as $\widehat{HFL}(L)$, and they are the homology discussed above.

The filtration on the second complex is compatible with the forgetting of components (proposition 7.1 in \cite{os3}).
Namely, let $M$ be the two-dimensional graded vector space with one generator in grading 0 and one in grading $-1$.

\begin{proposition}
Let $L$ be an oriented, $r$-component link in $S^3$ and distinguish the first component $K_1$. Consider the complex
$\widehat{CFL}(L)$ viewed as a $\mathbb{Z}^{n-1}$-filtered chain complex where the filtration corresponding to the first component is omitted.
The filtered homotopy type of this complex is identified with $\widehat{CFL}(L-K_1)\otimes M$.
\end{proposition}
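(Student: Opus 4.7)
The plan is to work directly at the level of multi-pointed Heegaard diagrams and reduce the statement to the standard extra-basepoint stabilisation formula. Pick an admissible multi-pointed Heegaard diagram $(\Sigma,\boldsymbol{\alpha},\boldsymbol{\beta},\mathbf{w},\mathbf{z})$ for $L$ in which each component $K_i$ is traced by a pair of basepoints $w_i,z_i$. The chain complex $\widehat{CFL}(L)$ is generated by the intersection points of the $\alpha$- and $\beta$-tori, with differential counting pseudo-holomorphic discs that miss every $w_i$ and every $z_i$, and the filtration is recorded by the multiplicities of the discs at the $z_i$'s relative to the $w_i$'s. Forgetting $K_1$ simply means no longer using the pair $(w_1,z_1)$ to carry filtration data; instead we keep $w_1$ and $z_1$ as ordinary basepoints that must still be avoided by the holomorphic discs.

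First I would identify the filtrations. Via the Turaev-style dictionary between nowhere-vanishing vector fields and multi-pointed Heegaard diagrams, one shows that the filling map $G_{K_1}:\underline{\mathrm{Spin}}^{c}(S^3,L)\to\underline{\mathrm{Spin}}^{c}(S^3,L-K_1)$ corresponds, combinatorially, to projecting the $\mathbb{Z}^{r}$-filtration onto its last $r-1$ coordinates. This amounts to checking that the canonical extension of an admissible vector field across the tubular neighborhood of $K_1$ matches the change in relative Chern class obtained from the diagram after demoting $(w_1,z_1)$ to free basepoints. This is a direct, if fiddly, comparison on the level of vector fields, analogous to the one carried out in \cite{os3} for the absolute case.

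Next I would compare chain complexes. The generators of $\widehat{CFL}(L)$ and of the candidate complex for $L-K_1$ with an extra $(w_1,z_1)$ are literally the same. The differentials also coincide, because in both complexes the discs are required to miss all basepoints; only the bookkeeping of the filtration differs, and that was dealt with in the previous step. Thus, up to the identification of the Spin$^c$-filtration, $\widehat{CFL}(L)$ regarded via $G_{K_1}$ is exactly the $\widehat{CFL}$ of the diagram for $L-K_1$ with one additional free basepoint pair.

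The final step is to apply the standard extra-basepoint formula: inserting a free basepoint pair $(w,z)$ into a multi-pointed Heegaard diagram changes the filtered chain homotopy type by a tensor factor of the graded module $M$ with generators in bigradings $(0,\mathbf{0})$ and $(-1,\mathbf{0})$. This is established by a handleslide/destabilisation argument that cancels the extra pair against a small index-one stabilisation and tracks the two surviving generators. Combining this with the identification of filtrations yields $\widehat{CFL}(L-K_1)\otimes M$, as claimed. I expect the main technical obstacle to be precisely the first step: pinning down that the geometric filling map $G_{K_1}$ intertwines with the combinatorially obvious projection of multi-pointed Spin$^c$-data; once that bridge is in place the rest is essentially formal book-keeping plus a known stabilisation lemma.
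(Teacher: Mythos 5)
The paper does not actually prove this statement: it is quoted as Proposition 7.1 of Ozsv\'ath--Szab\'o \cite{os3}, so there is no internal argument to compare yours against. Your overall architecture --- match the filling map $G_{K_1}$ with the projection of the Alexander multi-filtration, identify the two chain complexes generator-by-generator, then invoke the extra-basepoint stabilisation --- is indeed the architecture of the proof in \cite{os3}.

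There is, however, a genuine error in your description of $\widehat{CFL}(L)$ that breaks your central step. As a \emph{filtered} complex (obtained from $CFL^-$ by setting $U_1=\dots=U_r=0$), $\widehat{CFL}(L)$ has differential counting discs with $n_{w_i}=0$ for all $i$ but with \emph{arbitrary} multiplicities $n_{z_i}$; the $z$-multiplicities shift the filtration level, they are not forbidden. Requiring the discs to miss every $z_i$, as you do, produces the associated graded object, whose homology is $\widehat{HFL}(L)$, and your claim that ``the differentials coincide because in both complexes the discs miss all basepoints'' then identifies the wrong complex. Forgetting $K_1$ must be implemented by \emph{deleting} $z_1$, so that $w_1$ becomes a single free basepoint and the disc counts formerly recorded as filtration shifts in the $K_1$-direction become honest differentials of a diagram for $L-K_1$ with one extra basepoint; it is not implemented by keeping $(w_1,z_1)$ as a pair of points to be avoided. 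With your convention the conclusion fails already for the Hopf link: the associated graded in the $K_1$-direction has homology of rank $4$, while $\widehat{CFL}$ of the unknot tensored with $M$ has homology of rank $2$, so the two filtered homotopy types cannot agree. Once the differential is corrected, the rest of your outline goes through as in \cite{os3}; note also that the stabilisation formula you want is for one extra \emph{free basepoint} (contributing a single tensor factor of $M$), not for an extra pair.
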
  

If we forget all components of $L$, we get either the complex $$\hat{CF}(S^{3})\otimes M^{r-1},$$ where $\hat{CF}(S^3)$ has one-dimensional homology in grading 0 or $${CF}^{-}(S^3)=\mathbb{Z}[U],$$ where all $U_i$ acts by the multiplication by $U$.
  
This proposition is a direct analogue to the equation (\ref{forg}).

\bigskip

The three-manifolds with simplest Heegaard-Floer homology are the rational homology spheres  $Y$,
for which the rank of the Heegaard-Floer homology is equal to the order of the first (singular) homology,
i.e. $$\mbox{\rm rk}\,\,\,\, \widehat{HF(Y)} = |H_1(Y ;Z)|.$$ These manifolds are called $L$-spaces, for example, lens spaces are L-spaces. 
In the case that some positive surgery on $K$ gives an $L$-space,
we call $K$ an $L$-space knot. It was proved by M. Hedden in \cite{hedden} that all algebraic knots (i.e. links of 
irreducible plane curve singularities) belong to the class of $L$-space knots.

It was proved in \cite{os}, that for the $L$-space knot $K$ and any filtration level $n$
\begin{equation}
\label{rk1}
\mbox{\rm rk}\,\,\,\, H^{*}(CFL^{-}(K,n)/U_1(CFL^{-}(K,n)))=1.
\end{equation}
This is a key geometric ingredient in the proof of  Theorem \ref{T5}.

\subsection{Matching the answers}

Consider the Poincar\'e polynomial for the Heegaard-Floer homologies:
$$HFL(t,u)=\sum u^d t^s \dim \widehat{HFL}_{d,s}(K).$$
It categorifies the Alexander polynomial in the sense that
$$HFL(t,-1)=t^{-\deg \Delta/2}\Delta(t).$$

Remark that the coefficients in $\overline{P}_{g}(t,q)$ are always equal to 0 or to $\pm 1$. It can be proved  from the equation (\ref{123}).

\begin{theorem} 
\label{T6}
Take $\overline{P}_{g}(t,q)$ and let us make a following change  in it: $t^{\alpha}q^{\beta}$ is transformed to $t^{\alpha}u^{-2\beta}$, and $-t^{\alpha}q^{\beta}$ is transformed to $t^{\alpha}u^{1-2\beta}.$
We get a polynomial $\widetilde{\Delta}_{g}(t,u)$. Then
\begin{equation}
\label{ptohfk}
\widetilde{\Delta}_{g}(t^{-1},u)=t^{-\deg \Delta/2}HFL(t,u).
\end{equation}
\end{theorem}

\begin{example}
For $(3,5)$ torus knot we have
$$P_{g}(t,q)=1+qt^3+q^2t^5+q^3t^6+{q^4t^8\over 1-qt},$$
$$\overline{P}_{g}(t,q)=1-qt+qt^3-q^2t^4+q^2t^5-q^4t^7+q^4t^8,$$
$$\widetilde{\Delta_{g}}(t,q)=1+u^{-1}t+u^{-2}t^3+u^{-3}t^4+u^{-4}t^5+u^{-7}t^{7}+u^{-8}t^{8},$$
and
$$HFL(t,u)=t^{4}+u^{-1}t^{3}+u^{-2}t+u^{-3}t^{0}+u^{-4}t^{-1}+u^{-7}t^{-3}+u^{-8}t^{-4}.$$
\end{example}

\begin{proof}
To prove (\ref{ptohfk}) we match Theorem 5 with the equation (\ref{123}).

In the notation of Theorem 5 the non-symmetrized Alexander polynomial equals to
$$\Delta=\sum_{i=k}^{-k}(-1)^{k-i}t^{n_k-n_i}=\sum_{i=0}^{2k}(-1)^{i}t^{n_k-n_{k-i}},$$
$$P(t)={\Delta\over 1-t}=\sum_{i=0}^{k-1}\sum_{j=n_k-n_{k-2i}}^{n_k-n_{k-2i-1}-1}t^{j}+{t^{2n_k}\over 1-t}.$$
Note that for $i>0$ $$\delta_{k-2i}=\delta_{k-2i+1}-1=\delta_{k-2(i-1)}-2(n_{k-2i+2}-n_{k-2i+1}),$$
so
$$P_{g}(t,q)=\sum_{i=0}^{k-1}\sum_{j=n_k-n_{k-2i}}^{n_{k}-n_{k-2i-1}-1}q^{(j-n_{k}+n_{k-2i})-\delta_{k-2i}/2}t^{j}+{t^{2n_k}q^{n_k}\over 1-qt},$$
$$\overline{P}_{g}(t,q)=\sum_{i=0}^{k-1}(q^{-\delta_{k-2i}/2}t^{n_{k}-n_{k-2i}}-q^{-\delta_{k-2i-1}/2}t^{n_{k}-n_{k-2i-1}})+t^{2n_k}q^{n_k}.$$

Now $$\widetilde{\Delta_{g}}(t,u)=\sum_{i=0}^{k-1}(u^{\delta_{k-2i}}t^{n_{k}-n_{k-2i}}+u^{\delta_{k-2i-1}}t^{n_{k}-n_{k-2i-1}})+t^{2n_k}u^{-2n_k},$$
$$t^{n_k}\widetilde{\Delta_{g}}(t^{-1},u)=\sum_{i=0}^{k-1}(u^{\delta_{k-2i}}t^{n_{k-2i}}+q^{\delta_{k-2i-1}}t^{n_{k-2i-1}})+t^{2n_k}u^{-2n_k}=\sum_{i=-k}^{k}u^{\delta_{i}}t^{n_i}=HFL(t,u).$$

\end{proof}

\subsection{Comparing filtered complexes}

In this section we try to describe the relation between the knot filtration on the Heegaard-Floer complexes and the filtration on the space of functions defined by a curve.

To be more close to the algebraic setup, we reverse all signs for filtrations and for the homological (Maslov) grading as well (so we get cohomology groups). The Alexander grading is also changed to get the non-symmetrized Alexander polynomial. In another words, the Poincar\'e polynomial of the resulting cohomology
coincides with $\widetilde{\Delta_{g}}(t,u^{-1}).$ The operator $U$ will now increase the homological grading by 2.

Consider a $\mathbb{Z}_{\ge 0}$-indexed filtration $J_n$ by vector subspaces (with finite codimensions) on a infinite-dimensional complex vector space $J_0$. It induces a filtration by projective subspaces $\mathbb{P}J_{n}$ on $\mathbb{P}J_0=\mathbb{CP}^{\infty}$:
$$\mathbb{P}J_0\stackrel{j_1}{\hookleftarrow}\mathbb{P}J_1\stackrel{j_2}{\hookleftarrow}\mathbb{P}J_2\stackrel{j_3}{\hookleftarrow}\ldots, $$
so we have a sequence of corresponding Gysin maps in cohomology:
$$H^{*}(\mathbb{P}J_0)\stackrel{(j_1)_{*}}{\hookleftarrow}H^{*-2\cdot\mbox{\rm codim}J_1}\mathbb{P}J_1\stackrel{(j_2)_{*}}{\hookleftarrow}H^{*-2\cdot\mbox{\rm codim}J_2}\mathbb{P}J_2\stackrel{(j_3)_{*}}{\hookleftarrow}\ldots. $$

We get a $\mathbb{Z}_{\ge 0}$-indexed filtration $$F_{k}=(j_{k})_{*}(H^{*}(\mathbb{P}J_k))$$ in $H^{*}(\mathbb{CP}^{\infty})=\mathbb{Z}[U]$, which is compatible with the multiplication by $U$. If we also know (as for the filtration defined by the orders on the curve),
that $\dim J_{k}/J_{k+1}\le 1$, we conclude that $U$ increase the filtration level at least by 1.

The motivic Poincar\'e series in this setup can be written as 
$$P_{g}(t,q)=\sum_{k,n} t^{k}q^{n/2}\dim H^{n}(F_k/F_{k+1}).$$

The situation is similar to the Heegaard-Floer complexes, but  $U$ may increase the filtration level more that by 1. To avoid this problem, we should modify the complex.

\begin{example}
Consider the following filtered complex $T$: it has generators $U^{k}a_0$, $U^{k}a_1$ and $U^{k}a_2$.
The homological degree of $U^{l}a_j$ equals to $2l+j$ and its filtration level equals to $l+j$. 
The differential is defined as
$$d(a_1)=a_2+Ua_0.$$
One can check that  
$$\sum_{k,n}t^k u^n \dim H^{n}(T_k/T_{k+1})=1+u^2t^2+u^4t^3+u^6t^4+\ldots$$
(so this complex corresponds to minus-version of the Heegaard-Floer homology of the trefoil knot)
and $\mbox{\rm rk} H^{*}(T_k/UT_k)=1$ for all $k$. 
Remark that if $\widehat{T}^{k}=T_{k}/UT_{k-1}$, then 
$$\sum_{k,n}t^k u^n \dim H^{n}(\widehat{T}_k/\widehat{T}_{k+1})=1+ut+u^2t^2,$$
what is the Poincar\'e polynomial for the hat-version of the Heegaard-Floer homology of the trefoil.
\end{example}

\bigskip

Let us turn to the general case. Consider the complex 
\begin{equation}
\label{com}
\mathcal{C}_0=F_0[U_1]+(F_0[1])[U_1]
\end{equation}
with the filtration
$$\mathcal{C}_{n}=\bigoplus_{k+l=n}U_1^{l}F_{k}\oplus \bigoplus_{k+l=n-1}U_1^{l}F_{k}[1]$$
and the natural action of the operator $U_1$ of homological degree 2.
The differential is given by the equation
$$d(x)=U_1\cdot x+Ux.$$
One can check that this differential preserves the filtration $\mathcal{C}_n$ and commutes with $U_1$.

\begin{lemma}
$$H^{*}(\mathcal{C}_n/\mathcal{C}_{n+1})=F_n/F_{n+1},
\mbox{\rm rk}\quad H^{*}(\mathcal{C}_n/U_1(\mathcal{C}_{n}))=1.$$
\end{lemma}

\begin{proof}
We have
$$\mathcal{C}_{n}/\mathcal{C}_{n+1}=\bigoplus_{k+l=n}U_1^{l}(F_{k}/F_{k+1})\oplus \bigoplus_{k+l=n-1}U_1^{l}(F_{k}/F_{k+1})[1].$$
Since the $U_1$-increasing component of the differential
$$d_{1}(U_1^{l}x[1])=U_1^{l+1}x$$
gives the isomorphism
$$d_{1}:U_1^{l}(F_{k}/F_{k+1})\rightarrow U_1^{l+1}(F_k/F_{k+1}),$$
we have
$$H^{*}(\mathcal{C}_n/\mathcal{C}_{n+1})=F_n/F_{n+1}.$$

Also we have
$$\mathcal{C}_{n}/U_1(\mathcal{C}_n)=F_0\oplus F_0[1]\bigoplus_{k+l=n,\\ l>0}U_1^{l}(F_{k}/F_{k+1})\oplus \bigoplus_{k+l=n-1,\\ l>0}U_1^{l}(F_{k}/F_{k+1})[1],$$
and up to the isomorphisms $d_1$ we have the complex $F_0\oplus F_0[1]$ with the differential
$$d_{2}(x[1])=Ux,$$
so
$$rk\,\,\, H^{*}(\mathcal{C}_n/U_1(\mathcal{C}_{n}))=1.$$
\end{proof}

\bigskip

The properties of the complex $\mathcal{C}_0$ are similar to the ones of the complex $CFL^{-}(K)$. More precisely, the calculations of \cite{os} (lemma 3.1 and lemma 3.2) imply the following 

\begin{proposition}
Suppose that a cochain complex $\mathcal{C}$ has a filtration $\mathcal{C}_{k},\,\,\, k\ge 0$ and an injective operator  $U$ of homological degree 2 acting on it such that

1)$U(\mathcal{C}_{k})\subset C_{k+1}$ and $U^{-1}(\mathcal{C}_k)\subset \mathcal{C}_{k-1}$ (this means that $U$ increase the level of filtration exactly by 1) 
\medskip

2)$H^{*}(\mathcal{C}_{k}/U(\mathcal{C}_{k}))$ has rank 1 for all $k$.
\medskip

Then 

3) For all $k$ the rank of $H^{*}(\mathcal{C}_{k}/\mathcal{C}_{k+1})$ is at most 1.
\medskip

Let $\{0,\sigma_1,\sigma_2,\ldots \}$ is the set of $k$ such that this rank is 1.
Then

4) $H^{*}(\mathcal{C}_{\sigma_k}/\mathcal{C}_{\sigma_k+1})$ belongs to degree $2k$.
\medskip

Let $$Q(t,q)=\sum_{k=0}^{\infty}q^{k}t^{\sigma_k},\,\,\,\overline{Q}(t,q)=Q(t,q)(1-qt).$$
Let us make a following change  in $\overline{Q}$: $t^{\alpha}q^{\beta}$ is transformed to $t^{\alpha}u^{2\beta}$, and $-t^{\alpha}q^{\beta}$ is transformed to $t^{\alpha}u^{2\beta-1}.$ 

5) The result is equal to $$\sum_{k,n}t^{k}u^{n}\dim H^{n}(\mathcal{C}_{k}/(\mathcal{C}_{k+1}+U\mathcal{C}_{k-1})).$$
\end{proposition}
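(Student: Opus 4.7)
The plan is to combine three natural short exact sequences of filtered chain complexes and run short inductions on $k$. Condition (1), that $U$ raises the filtration level by exactly one, will be used in two guises: on the one hand it gives the identifications $U\mathcal{C}_{k-1} \cap \mathcal{C}_{k+j} = U\mathcal{C}_{k+j-1}$ for $j \geq 1$, which decouple the filtration from the $U$-action, and on the other it ensures that the induced graded map $\bar U \colon \mathcal{C}_k/\mathcal{C}_{k+1} \to \mathcal{C}_{k+1}/\mathcal{C}_{k+2}$ is injective.

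For claims 1 and 3, I start from the short exact sequence
$$0 \to \mathcal{C}_{k+1}/U\mathcal{C}_k \to \mathcal{C}_k/U\mathcal{C}_k \to \mathcal{C}_k/\mathcal{C}_{k+1} \to 0, \quad (I)$$
whose middle term has rank-one cohomology by hypothesis (2), and couple it with
$$0 \to (\mathcal{C}_k/\mathcal{C}_{k+1})[-2] \xrightarrow{U} \mathcal{C}_{k+1}/U\mathcal{C}_{k+1} \to \mathcal{C}_{k+1}/U\mathcal{C}_k \to 0, \quad (II)$$
whose leftmost identification uses the injectivity of $U$ and whose middle term again has rank-one cohomology. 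Chaining the long exact sequences of (I) and (II) yields a recursion expressing the Poincar\'e polynomial of $H^*(\mathcal{C}_k/\mathcal{C}_{k+1})$ in terms of that of $H^*(\mathcal{C}_{k-1}/\mathcal{C}_k)$, controlled entirely by the known rank-one pieces. Induction on $k$, based at $k = 0$ where the bound follows directly from the long exact sequence of (I), establishes claim 1. Tracking the cohomological degree of the single rank-one generator through $U$-multiplication (which raises it by two) in the same recursion then forces the nonzero generator of $H^*(\mathcal{C}_{\sigma_k}/\mathcal{C}_{\sigma_k+1})$ to lie in degree $2k$, giving claim 3.

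For claim 4, I introduce the third short exact sequence
$$0 \to (\mathcal{C}_{k-1}/\mathcal{C}_k)[-2] \xrightarrow{\bar U} \mathcal{C}_k/\mathcal{C}_{k+1} \to \mathcal{C}_k/(\mathcal{C}_{k+1} + U\mathcal{C}_{k-1}) \to 0, \quad (III)$$
whose injectivity on the left is again condition (1). Its long exact sequence, combined with the description of $H^*(\mathcal{C}_j/\mathcal{C}_{j+1})$ from claims 1 and 3, shows by a direct case analysis that $H^*(\mathcal{C}_k/(\mathcal{C}_{k+1} + U\mathcal{C}_{k-1}))$ is nonzero exactly at $k = \sigma_j$ in degree $2j$ (the surviving generator of $\mathcal{C}_k/\mathcal{C}_{k+1}$ modulo the image of $\bar U$) and at $k = \sigma_j + 1$ in degree $2j + 1$ (the connecting image, present when the class one filtration level down is hit by $\bar U$). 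Summing over $j$ yields $\sum_j t^{\sigma_j} u^{2j} + \sum_j t^{\sigma_j + 1} u^{2j + 1}$, which matches the image of $\overline{Q}(t,q) = (1-qt)Q(t,q) = \sum_j q^j t^{\sigma_j} - \sum_j q^{j+1} t^{\sigma_j + 1}$ under the prescribed substitution.

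The main obstacle will be the bookkeeping in the first step: one needs to identify the connecting homomorphisms of the long exact sequences of (I) and (II) precisely enough to see that the two rank-one modules contribute compatibly to the single allowed generator of $H^*(\mathcal{C}_k/\mathcal{C}_{k+1})$, and to rule out the possibility that the recursion produces a rank-two answer. Once that induction closes up, the remaining degree tracking and the case analysis for (III) are elementary and parallel the calculations in Lemmas 3.1 and 3.2 of \cite{os} that the proposition explicitly invokes.
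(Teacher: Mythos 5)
Your three short exact sequences (I), (II), (III) are the right structural ingredients: they encode the square $U\mathcal{C}_k\subset\mathcal{C}_{k+1}$, $U\mathcal{C}_{k-1}\subset\mathcal{C}_k$ together with the identity $\mathcal{C}_{k+1}\cap U\mathcal{C}_{k-1}=U\mathcal{C}_k$, and this is indeed the skeleton of the argument the paper outsources to Lemmas 3.1--3.2 of Ozsv\'ath--Szab\'o (the paper itself gives no independent proof, only the citation). However, as written the proposal does not prove the proposition, for two concrete reasons.

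First, the heart of claims 1 and 3 is exactly the step you defer. The long exact sequences of (I) and (II) by themselves only yield the mutual bounds $\mathrm{rk}\,H^{*}(\mathcal{C}_k/\mathcal{C}_{k+1})\le 1+\mathrm{rk}\,H^{*}(\mathcal{C}_{k+1}/U\mathcal{C}_k)$ and $\mathrm{rk}\,H^{*}(\mathcal{C}_{k+1}/U\mathcal{C}_k)\le 1+\mathrm{rk}\,H^{*}(\mathcal{C}_k/\mathcal{C}_{k+1})$, which bound neither quantity; your proposed base case at $k=0$ also fails, since the LES of (I) there gives $\mathrm{rk}\,H^{*}(\mathcal{C}_0/\mathcal{C}_1)\le 1+\mathrm{rk}\,H^{*}(\mathcal{C}_1/U\mathcal{C}_0)$ with no a priori control on the second term. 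To close the induction one must use more: the degree-$2$ shift of $U$, the fact that the composites $H^{*}(\mathcal{C}_{k+1}/U\mathcal{C}_{k+1})\to H^{*}(\mathcal{C}_{k+1}/U\mathcal{C}_k)\to H^{*}(\mathcal{C}_k/U\mathcal{C}_k)$ and $U_{*}$ between the rank-one groups annihilate each other, and a stabilization statement for large $k$. Acknowledging that one must ``rule out the possibility that the recursion produces a rank-two answer'' is naming the theorem, not proving it.

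Second, the case analysis for claim 4 gives the wrong answer in the generic situation $\sigma_{j+1}=\sigma_j+1$ (which occurs for every algebraic knot, the semigroup being cofinite). There $H^{*}((\mathcal{C}_{k-1}/\mathcal{C}_k)[-2])$ and $H^{*}(\mathcal{C}_k/\mathcal{C}_{k+1})$ are both rank one in the same degree $2j+2$, the map induced by $\bar U$ in (III) is an isomorphism, and hence $H^{*}(\mathcal{C}_k/(\mathcal{C}_{k+1}+U\mathcal{C}_{k-1}))=0$ --- matching the fact that the monomials $q^{j+1}t^{\sigma_{j+1}}$ and $-q^{j+1}t^{\sigma_j+1}$ cancel in $\overline{Q}$. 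Your unconditional sum $\sum_j t^{\sigma_j}u^{2j}+\sum_j t^{\sigma_j+1}u^{2j+1}$ keeps both contributions, i.e.\ you have applied the substitution to the two uncancelled sums separately. Since the substitution treats $+q^{\beta}t^{\alpha}$ and $-q^{\beta}t^{\alpha}$ differently, it must be applied to $\overline{Q}$ after cancellation (this is how Theorem 6 uses it), and the two procedures give different polynomials. One can check this on the paper's own example complex $T$: there $\mathcal{C}_3/(\mathcal{C}_4+U\mathcal{C}_2)=0$, while your formula predicts a nonzero group at $t^3$.
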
 

\bigskip

The second condition is analogous to the equation (\ref{rk1}) for the Heegaard-Floer homology of the $L$-space knots.  

The last result can be reformulated as follows. Consider the complex $\widehat{\mathcal{C}}_{k}=\mathcal{C}_{k}/U\mathcal{C}_{k-1},$ then the last homology is the homology of the associated graded object $\widehat{\mathcal{C}}_{k}/\widehat{\mathcal{C}}_{k-1}$. The multiplication by $1-qt$ corresponds to the exact sequence
$$0\rightarrow \mathcal{C}_{k-1}/\mathcal{C}_{k}\stackrel{U}{\rightarrow}\mathcal{C}_{k}/\mathcal{C}_{k+1}\rightarrow \widehat{\mathcal{C}}_{k}/\widehat{\mathcal{C}}_{k+1}\rightarrow 0.$$

\bigskip

As a corollary we get that the series $Q(t,1)$ determines completely all discussed cohomology. Since for the filtered complexes $\mathcal{C}$ and $CFL^{-}$ we have $Q(t,1)=\Delta(t)/(1-t)$ for both, we have the equality of the cohomology of the associated graded objects and the more clear proof of the Theorem 6. As an another corollary, we get the equation 
\begin{equation}
\label{cs0}
H^{*}(CFL^{-}(S^3)/CFL^{-}_{s}(S^3,K))\cong H^{*}(\mathbb{P}(\mathcal{O}/J_s)),
\end{equation}
which looks more geometric than the Theorem 6.

\begin{remarks}

1. It would be interesting to construct the analogous $\mathbb{Z}^{n}$-filtered complex of $\mathbb{Z}[U_1,\ldots,U_n]$
for multi-component links which would carry the information about the Poincar\'e series of the corresponding multi-index filtration.


2. It would be also interesting to compare these results with the ones of \cite{n1}, \cite{n2} and \cite{n3} computing the Seiberg-Witten and Heegaard-Floer invariants of links of surface singularities. 
\end{remarks}

\subsection{Example: $A_{2n-1}$ singularities}

Since the algorithm of computation of the (reduced) motivic Poincar\'e series is quite complicated, it is useful to have a series of answers where the motivic Poincar\'e series and the Heegaard-Floer link homology can be computed. 

\begin{proposition}
Consider the singularity of type $A_{2n-1}$ given by the equation
$$y^2=x^{2n}.$$
From the topological viewpoint this corresponds to the 2-component link, whose components are unknotted, all intersections are positive and the linking number of the components equals to $n$. 
Then
$$P_{g}(t_1,t_2)=1+qt_1t_2+\ldots+q^{n-1}t_1^{n-1}t_2^{n-1}+{q^{n}(1-q)t_1^nt_2^n\over (1-t_1q)(1-t_2q)}.$$
\end{proposition}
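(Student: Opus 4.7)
I would compute $P_g$ directly from the filtration formula~(\ref{motfil}). The two smooth branches of $y^2 = x^{2n}$ admit the parametrizations $\gamma_1(t) = (t, t^n)$ and $\gamma_2(t) = (t, -t^n)$, the ideal is $I_C = (y^2 - x^{2n})$, and a basis of $\mathcal{O}/I_C$ is $\{x^k, x^k y\}_{k \geq 0}$. For $f = \sum_k a_k x^k + \sum_k b_k x^k y$ the expansions $f(\gamma_\pm(t)) = \sum_k a_k t^k \pm \sum_k b_k t^{k+n}$ reduce the conditions $v_1(f) \geq v_1$ and $v_2(f) \geq v_2$ to the linear systems $a_j + b_{j-n} = 0$ for $j < v_1$ and $a_j - b_{j-n} = 0$ for $j < v_2$, with the convention $b_{j-n} := 0$ when $j < n$. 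A direct count of independent linear conditions yields
$$h(v_1, v_2) := \mbox{\rm codim}\, J_{v_1, v_2} = \begin{cases} \max(v_1, v_2), & \min(v_1, v_2) \leq n, \\ v_1 + v_2 - n, & \min(v_1, v_2) > n, \end{cases}$$
for $v_1, v_2 \geq 0$, and $h(v_1, v_2) = h(\max(v_1, 0), \max(v_2, 0))$ in general.

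Substituting this into $L_g(t_1, t_2) = \sum_{\underline{v}\in\mathbb{Z}^2}\frac{q^{h(\underline{v})} - q^{h(\underline{v} + \underline{1})}}{1-q} t_1^{v_1} t_2^{v_2}$, the piecewise structure of $h$ makes $h(\underline{v}+\underline{1}) - h(\underline{v})$ equal $1$ in the ``small'' region $\min(v_1, v_2) \leq n-1$ and equal $2$ in the ``large'' region $\min(v_1, v_2) \geq n$. The small-region sum then decomposes as a diagonal part $\sum_{k=0}^{n-1} q^k (t_1 t_2)^k = (1 - q^n(t_1 t_2)^n)/(1 - qt_1 t_2)$ plus two symmetric off-diagonal pieces of the form $\frac{qt_i}{1-qt_i}\cdot \frac{1-(qt_1 t_2)^n}{1-qt_1 t_2}$, while the large-region sum factors as a product of geometric series and gives $(1+q) q^n(t_1 t_2)^n/((1-qt_1)(1-qt_2))$. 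There are also boundary contributions from indices with exactly one negative coordinate, producing rational expressions of the form $1/((t_i-1)(1-qt_j))$.

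After multiplying by $(t_1-1)(t_2-1)/(t_1 t_2-1)$ to form $P_g$, the boundary factors $1/(t_i-1)$ cancel against $(t_i-1)$ from the numerator, the factors $1 - qt_1 t_2$ from the small region cancel (up to sign) with $t_1 t_2 - 1$, and collecting the remaining pieces the polynomial $\sum_{k=0}^{n-1} q^k(t_1 t_2)^k$ emerges from the small region while the rational tail $q^n(1-q)(t_1 t_2)^n/((1-qt_1)(1-qt_2))$ comes from the large region. The main obstacle will be the careful bookkeeping of these cancellations across a formal Laurent-series computation; a cleaner route is to reindex via Abel summation to write
$$P_g = \frac{(t_1-1)(t_2-1)}{(1-q)\, t_1 t_2}\sum_{\underline{v}\in\mathbb{Z}^2} q^{h(\underline{v})} t_1^{v_1} t_2^{v_2},$$
and evaluate the right-hand side via the same regional decomposition. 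As a sanity check, setting $t_2 = 1$ in the proposed formula collapses it to $1/(1-qt_1)$, matching the motivic Poincare series of a smooth irreducible branch in accordance with the forgetting property~(\ref{forg}).
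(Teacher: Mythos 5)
Your proposal follows essentially the same route as the paper: both derive the codimensions $h(v_1,v_2)$ explicitly from the parametrizations $(t,\pm t^n)$ (your piecewise formula agrees with the paper's, since the two descriptions coincide on the overlap $\min(v_1,v_2)=n$) and then substitute into (\ref{motfil}), splitting the lattice sum into diagonal, off-diagonal, large-region and negative-index boundary pieces exactly as the paper does, so the plan is sound and does yield the stated formula. One small caveat: your remark that the factors $1-qt_1t_2$ ``cancel (up to sign) with $t_1t_2-1$'' is not literally what happens --- these are different polynomials for $q\neq 1$; rather, $(1-(qt_1t_2)^n)/(1-qt_1t_2)=\sum_{k=0}^{n-1}(qt_1t_2)^k$ is already a polynomial, and the genuine divisibility by $t_1t_2-1$ has to be checked on the fully assembled numerator (it does hold, as the paper verifies by direct expansion).
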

\begin{proof}
For the proof we use the equation (\ref{motfil}). Parametrisations of the components are
$$(x(t_1),y(t_1))=(t_1,t_1^{n}),\,\,\,\,\mbox{\rm and}\,\,\,\,\, (x(t_2),y(t_2))=(t_2,-t_2^{n}),$$
so
$$x^ay^b|_{C_1}=t_1^{a+bn},\,\,\,\, x^ay^b|_{C_2}=(-1)^{b}t_2^{a+bn}.$$
If $a<n$, then every function with order $a$ on $C_1$ has a form $x^{a}+\ldots$, so
its order on $C_2$ is also equal to $a$.

For every $a,b \ge n$ consider the function $x^{a-n}(x^n+y)+x^{b-n}(x^n-y)$. Its restrictions on $C_1$ and $C_2$ are respectively equal to $2t_1^{a}$ and $2t_2^{b}$, therefore $$\dim J_{a,b}/J_{a+1,b}=\dim J_{a,b}/J_{a,b+1}=1.$$

The codimensions $h(v_1,v_2)$ are equal to $v_1+v_2-n$, if $v_1,v_2\ge n$, to $v_2$, if $v_1<n, v_2\ge n$, to
$v_1$, if $v_2<n, v_1\ge n$, and to $\max(v_1, v_2)$, if $0\le v_1,v_2<n$. We have 
$$L^{A_{2n-1}}_{g}(t_1,t_2,q)=\sum_{0\le \max(v_1,v_2);\min(v_1,v_2)< n}t_1^{v_1}t_2^{v_2}q^{\max(v_1,v_2)}+(1+q)\sum_{v_1,v_2=n}^{\infty}t_1^{v_1}t_2^{v_2}q^{v_1+v_2-n},$$
hence
$$L^{A_{2n-1}}_{g}(t_1-1)(t_2-1)=-1+(1-q)t_1t_2+\ldots+(q^{n-2}-q^{n-1})t_1^{n-1}t_2^{n-1}+q^{n-1}(1-q+q^2)t_1^{n}t_2^{n}$$
$$+{q^{n+1}t_1^{n+1}t_2^{n}(q-1)\over 1-qt_1}+{q^{n+1}t_1^{n}t_2^{n+1}(q-1)\over 1-qt_2}+{q^{n}t_1^{n+1}t_2^{n+1}(1+q)(1-q)^2\over (1-qt_1)(1-qt_2)},$$
and
$$P^{A_{2n-1}}_{g}={L^{A_{2n-1}}_{g}(t_1-1)(t_2-1)\over t_1t_2-1}=1+qt_1t_2+\ldots+q^{n-1}t_1^{n-1}t_2^{n-1}+{q^{n}(1-q)t_1^{n}t_2^{n}\over (1-qt_1)(1-qt_2)}.$$ 

\end{proof}

\begin{corollary}
\begin{equation}
\label{a2n+1}
\overline{P}_{g}^{A_{2n-1}}(t_1,t_2)=[1+(q+q^2)t_1t_2+\ldots+(q^{n-1}+q^{n})t_1^{n-1}t_2^{n-1}+q^{n}t_1^{n}t_2^{n}]
\end{equation}
$$-(t_1+t_2)[q+q^2t_1t_2+\ldots+q^{n}t_1^{n-1}t_2^{n-1}].$$
\end{corollary}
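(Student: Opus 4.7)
The plan is a purely mechanical computation starting from Proposition~7.4, which already gives the explicit closed form
$$P_{g}^{A_{2n-1}}(t_1,t_2)=\sum_{k=0}^{n-1}q^{k}t_1^{k}t_2^{k}+\frac{q^{n}(1-q)t_1^{n}t_2^{n}}{(1-qt_1)(1-qt_2)}.$$
By definition $\overline{P}_{g}=(1-qt_1)(1-qt_2)\cdot P_{g}$, so I would first observe that the second term loses its denominator and contributes simply $q^{n}(1-q)t_1^{n}t_2^{n}$. All the remaining work is therefore to expand
$$(1-qt_1)(1-qt_2)\sum_{k=0}^{n-1}q^{k}t_1^{k}t_2^{k}=S-qt_1S-qt_2S+q^{2}t_1t_2S,$$
where $S:=\sum_{k=0}^{n-1}q^{k}t_1^{k}t_2^{k}$.

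The key step is to combine the ``diagonal'' pieces $S$ and $q^{2}t_1t_2S$. Shifting the summation index in $q^{2}t_1t_2S=\sum_{k=1}^{n}q^{k+1}t_1^{k}t_2^{k}$ and adding to $S$ gives
$$S+q^{2}t_1t_2S=1+\sum_{k=1}^{n-1}(q^{k}+q^{k+1})t_1^{k}t_2^{k}+q^{n+1}t_1^{n}t_2^{n}.$$
Adding the fractional contribution $q^{n}(1-q)t_1^{n}t_2^{n}=q^{n}t_1^{n}t_2^{n}-q^{n+1}t_1^{n}t_2^{n}$ cancels the unwanted $q^{n+1}t_1^{n}t_2^{n}$ and produces exactly the first bracket in the corollary, namely
$$1+(q+q^{2})t_1t_2+\cdots+(q^{n-1}+q^{n})t_1^{n-1}t_2^{n-1}+q^{n}t_1^{n}t_2^{n}.$$

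The ``off-diagonal'' pieces are handled similarly: factoring $qt_1 S+qt_2 S=(t_1+t_2)\cdot qS$ and noting that $qS=\sum_{k=0}^{n-1}q^{k+1}t_1^{k}t_2^{k}=q+q^{2}t_1t_2+\cdots+q^{n}t_1^{n-1}t_2^{n-1}$ reproduces the second bracket with the correct sign. There is no real obstacle here; the only thing to watch is the bookkeeping of the extreme terms, where the shift in the index of $q^{2}t_1t_2S$ meets the constant $1$ at the low end and the $q^{n+1}t_1^{n}t_2^{n}$ at the high end — and it is precisely at the high end that the $(1-q)$-factor from the rational part of $P_g$ is needed to restore the clean coefficient $q^{n}$.
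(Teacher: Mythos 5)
Your computation is correct and is exactly what the paper intends: the corollary is an immediate algebraic consequence of the Proposition, obtained by multiplying the closed form of $P_{g}^{A_{2n-1}}$ by $(1-qt_1)(1-qt_2)$ and regrouping the diagonal and off-diagonal terms as you do. The bookkeeping at the top term, where $q^{n}(1-q)t_1^{n}t_2^{n}$ cancels the stray $q^{n+1}t_1^{n}t_2^{n}$, is handled correctly.
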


In \cite{os3} Ozsv\'ath and Szab\'o computed the Heegaard-Floer homology of the corresponding links.
In their notation the answer has the following form (everywhere we write the Poincar\'e polynomials of the corresponding complexes). Let
$$Y_{(d)}^{l}(t_1,t_2,u)=u^{d}(t_1^{l}+t_1^{l-1}t_2+\ldots+t_2^{l})+u^{d-1}(t_1^{l-1}+\ldots+t_2^{l-1}),$$
$$B_{(d)}(t_1,t_2,u)=u^{d}+(t_1+t_2)u^{d+1}+u^{d+2}t_1t_2.$$
Then
$$HFL_{A_{2n-1}}(t_1,t_2,u)=Y^{0}_{(0)}t_1^{n/2}t_2^{n/2}+Y^{1}_{(-1)}t_1^{n/2-1}t_2^{n/2-1}+\sum_{i=2}^{n}B_{(-2i)}t_1^{n/2-i}t_2^{n/2-i}.$$

Since $Y^{0}_{(0)}=1$ and $Y^{1}_{(-1)}=u^{-1}(t_1+t_2)+u^{-2}$ one can simplify this as
$$HFL_{A_{2n-1}}(t_1,t_2,u)=t_1^{n/2}t_2^{n/2}+(u^{-1}(t_1+t_2)+u^{-2})t_1^{n/2-1}t_2^{n/2-1}$$
$$+\sum_{i=2}^{n}(u^{-2i}+(t_1+t_2)u^{-2i+1}+u^{-2i+2}t_1t_2)t_1^{n/2-i}t_2^{n/2-i},$$
so
$$t_1^{n/2}t_2^{n/2}HFL_{A_{2n-1}}(t_1^{-1},t_2^{-1},u)=1+(u^{-1}(t_1+t_2)+u^{-2}t_1t_2)$$
$$+\sum_{i=2}^{n}(u^{-2i}t_1^{i}t_2^{i}+(t_1+t_2)u^{-2i+1}t_1^{i-1}t_2^{i-1}+u^{-2i+2}t_1^{i-1}t_2^{i-1})=$$
$$[1+2u^{-2}t_1t_2+\ldots+2u^{-2n+2}t_1^{n-1}t_2^{n-1}+u^{-2n}t_1^{n}t_2^{n}]$$
$$-(t_1+t_2)[u^{-1}+u^{-3}t_1t_2+\ldots+u^{-2n+1}t_1^{n-1}t_2^{n-1}].$$

\bigskip

The last expression is similar to (\ref{a2n+1}) in analogy with the Theorem 6. 


\section{Appendix}

{\bf Proof of Lemma \ref{L4}. }

We have
$$\sum u^{\hat{n}_i}\phi_i(I,K,\hat{n})=\sum_j\sum_{\hat{n}=j+f_i(K,I)}^{\infty} u^{\hat{n}_i}
(-1)^{j}{1-\chi(E_i^{\circ})-f_i(I,K)\choose j}q^{j}=$$
$${u^{f_i(K,I)}\over 1-u}\sum_{j}(-1)^{j}{1-\chi(E_i^{\circ})-f_i(I,K)\choose j}(uq)^{j}={u^{f_i(K,I)}\over 1-u}(1-uq)^{1-\chi(E_i^{\circ})-f_i(I,K)},$$
and
$$\sum u^{\hat{n}}G(K,I,\hat{n})=q^{|I|}(1-q)^{|I|+|K|}\prod_{i}{u_i^{f_i(K,I)}\over 1-u_i}(1-u_iq)^{1-\chi(E_i^{\circ})-f_i(I,K)}.$$

$\square$

{\bf Proof of Lemma \ref{L5}}

$$A_K(u)=\sum_{I}q^{|I|}(1-q)^{|I|}\sum_{K_1}(-1)^{|K|-|K_1|}(1-q)^{|K_1|}\sum_n u^{n}\prod_i\phi_i(I,K_1,n).$$
We have
$$\sum_n u^{n}\prod_i\phi_i(I,K_1,n)=\prod_{i}{u_i^{f_i(K,I)}(1-u_iq)^{1-\chi(E_i^{\circ})-f_i(I,K)}\over 1-u_i}.$$
Now
$$\sum_{K_{1i}\subset (K\cap E_i)}(-1)^{|K\cap E_i|-|K_{i1}|}(1-q)^{|K_{1i}|}{1\over 1-u_i}u_i^{f_i(K_1,I)}(1-u_i q)^{1-\chi(E_i^{\circ})-f_i(I,K_1)}=$$
$${1\over 1-u_i}u_i^{f_i(K,I)}(1-u_i q)^{1-\chi(E_i^{\circ})-f_i(K,I)}\times$$
$$\sum_{K_{1i}}(-1)^{|K\cap E_i|-|K_{1i}|}(1-q)^{|K_{1i}|}u_i^{|K_{1i}|-|K\cap E_i|}(1-u_i q)^{|K\cap E_i|-|K_{1i}|}=$$
$${1\over 1-u_i}u_i^{f_i(K,I)}(1-u_i q)^{1-\chi(E_i^{\circ})-f_i(K,I)}(1-q-{1-u_i q\over u_i})^{|K\cap E_i|}=$$
$${1\over 1-u_i}(-1)^{|K\cap E_i|}u_i^{f_i(K,I)-|K\cap E_i|}(1-u_i q)^{1-\chi(E_i^{\circ})-f_i(K,I)}(1-u_i)^{|K\cap E_i|}.$$
Remark that $f_i(K,I)-|K\cap E_i|=f_i(I)$ and $$\chi(E_i^{\circ})+f_i(K,I)=\chi(E_i^{\bullet})-|K_0\cap E_i|+|K\cap E_i|+f_i(I),$$ hence the last expression can be rewritten in a form
$$(-1)^{|K\cap E_i|}u_i^{f_i(I)}(1-u_i q)^{1-\chi(E_i^{\bullet})+|\overline{K}\cap E_i|-f_i(I)}(1-u_i)^{|K\cap E_i|-1}.$$
Also
$$\sum_{I}q^{|I|}(1-q)^{|I|}\prod_i u_i^{f_i(I)}(1-u_iq)^{-f_i(I)}=\prod_{\sigma}(1+q(1-q)u_{i(\sigma)}u_{j(\sigma)}(1-u_{i(\sigma)}q)^{-1}(1-u_{j(\sigma)}q)^{-1})=$$
$$\prod_{i}(1-u_iq)^{\chi(E_i^{\bullet})-2}\prod_{\sigma}(1-qu_{i(\sigma)}-qu_{j(\sigma)}+qu_{i(\sigma)}u_{j(\sigma)}).$$
Therefore
$$A_{K}(u)=(-1)^{|K|}\prod_{i}(1-u_iq)^{1-\chi(E_i^{\bullet})+|\overline{K}\cap E_i|}(1-u_i)^{|K\cap E_i|-1}\times$$
$$\times\prod_{i}(1-u_iq)^{\chi(E_i^{\bullet})-2}\prod_{\sigma}(1-qu_{i(\sigma)}-qu_{j(\sigma)}+qu_{i(\sigma)}u_{j(\sigma)})=$$
$$(-1)^{|K|}\prod_{i}(1-u_i q)^{|\overline{K}\cap E_i|-1}(1-u_i)^{|K\cap E_i|-1}
\prod_{\sigma}(1-qu_{i(\sigma)}-qu_{j(\sigma)}+qu_{i(\sigma)}u_{j(\sigma)}).$$

$\square$

{\bf Proof of Lemma \ref{L7}}

We have to prove that $\widetilde{H}_{P}=0$ at $u_{\beta}=1$ for $\beta\in E(P)$.
Suppose that $E_{\beta}$ is intersected by $E_{\alpha_1},\ldots,E_{\alpha_k}$. For every set $E$ of divisors not containing $E_{\beta}$ let us compare the summands corresponding to $E$ and to $E\cup E_{\beta}$. 

For $E$ at $u_{\beta}=1$ we have 
$$\prod_{i\neq \beta} u_i^{-\sum a_{ij}\mu_j}(-1)^{|K_0\cap E|}q^{\Delta(E)}\prod_{i\in E}(q-u_i)^{k_i-1}
(1-q)^{k_{\beta}-1}\prod_{i\notin (P\cup E)}(1-qu_i)^{k_i-1}$$
$$\times \prod_{\sigma\notin E_{\beta}}(1-q^{1-\mu_{i(\sigma)}(E)}u_{i(\sigma)}-q^{1-\mu_{j(\sigma)}(E)}u_{j(\sigma)}+q^{1-\mu_{i(\sigma)}(E)-\mu_{j(\sigma)}(E)}u_{i(\sigma)}u_{j(\sigma)})\cdot (1-q)^{k}.$$

For $E\cup E_1$ at $u_{\beta}=1$ we have
$$\prod_{j=1}^{k}u_{\alpha_j}\prod_{i\neq \beta} u_i^{-\sum a_{ij}\mu_j}(-1)^{k_{\beta}+|K_0\cap E|}q^{\Delta(E\cup E_1)}(q-1)^{k_{\beta}-1}\prod_{i\in E}(q-u_i)^{k_i-1}
\prod_{i\notin (E\cup P)}(1-qu_i)^{k_i-1}$$
$$\times \prod_{\sigma\notin E_{\beta}}(1-q^{1-\mu_{i(\sigma)}(E)}u_{i(\sigma)}-q^{1-\mu_{j(\sigma)}(E)}u_{j(\sigma)}+q^{1-\mu_{i(\sigma)}(E)-\mu_{j(\sigma)}(E)}u_{i(\sigma)}u_{j(\sigma)})\cdot \prod_{j=1}^{k}(1-q)q^{-\mu_{\alpha_{j}}(E)}u_{\alpha_j}.$$

It rests to note that $\Delta(E\cup E_{\beta})-\Delta(E)=\sum_{j=1}^{k}\mu_{\alpha_{j}}(E)$.

$\square$

{\bf Proof of Lemma \ref{L8}.}

$$\sum_{n}u^n\sum_{E\subset E(P)}q^{-\sum_{i\in E}n_i-\Delta(E)-\sum_{i\in E}a_{ii}-|E|}q^{|K_0\cap E|}\times c_{P\cup E}(n_i+\sum a_{ij}\mu_j(E))=$$ 
$$\sum_{E\subset E(P)}\prod u_i^{-\sum a_{ij}\mu_j(E)}\cdot q^{\sum a_{ij}\mu_i(E)\mu_j(E)}\cdot q^{-\Delta(E)-\sum_{i\in I}a_{ii}+|K_0\cap E|-|E|}$$
$$\times\sum_{n_1} \prod_{i}(u_iq^{-\mu_i(E)})^{n_{1i}}\cdot c_{P\cup E}(n_1)=$$
$$\sum_{E\subset E(P)}\prod u_i^{-\sum a_{ij}\mu_j(E)}\cdot A_{P\cup E}(u_iq^{-\mu_i(E)})q^{\Delta(E)+|K_0\cap E|-|E|}=$$
$$(-1)^{|P|}\sum_{E\subset E(P)}\prod u_i^{-\sum a_{ij}\mu_j(E)}\cdot(-1)^{|K_0\cap E|}q^{\Delta(E)+|K_0\cap E|-|E|}\prod_{i\in E}[(1-u_i)^{-1}(1-u_iq^{-1})^{k_i-1}]$$
$$\times\prod_{i\in P}[(1-qu_i)^{k_i-p_i-1}(1-u_i)^{p_i-1}]\prod_{i\notin (P\cup E)}[(1-qu_i)^{k_i-1}(1-u_i)^{-1}]$$
$$\times\prod_{\sigma}(1-q^{1-\mu_{i(\sigma)}(E)}u_{i(\sigma)}-q^{1-\mu_{j(\sigma)}(E)}u_{j(\sigma)}+q^{1-\mu_{i(\sigma)}(E)-\mu_{j(\sigma)}(E)}u_{i(\sigma)}u_{j(\sigma)})=$$
$$(-1)^{|P|}\prod_{i\in P}[(1-qu_i)^{k_i-p_i-1}(1-u_i)^{p_i-1}]\cdot{1\over \prod_{i\in E(P)}(1-u_i)}$$
$$\times\sum_{E\subset E(P)}(-1)^{|K_0\cap E|}\cdot \prod u_i^{-\sum a_{ij}\mu_j(E)}\cdot q^{\Delta(E)}\prod_{i\in E}(q-u_i)^{k_i-1}
\prod_{i\notin E}(1-qu_i)^{k_i-1}$$ $$\times \prod_{\sigma}(1-q^{1-\mu_{i(\sigma)}(E)}u_{i(\sigma)}-q^{1-\mu_{j(\sigma)}(E)}u_{j(\sigma)}+q^{1-\mu_{i(\sigma)}(E)-\mu_{j(\sigma)}(E)}u_{i(\sigma)}u_{j(\sigma)}).$$

$\square$

{\bf Proof of Theorem \ref{T4}.}

Let $k_i=|K_0\cap E_i|.$
From Lemma \ref{L6} we get
$$\overline{P}_{g}({1\over qt_1},\ldots,{1\over qt_r})=(t_1\cdot\ldots\cdot t_r)^{-1}\sum_{\underline{n}} \underline{t}^{-M\underline{n}}q^{-\sum m_{ij}k_in_j}q^{F(n)-\sum n_i}\sum_{K}t_{\overline{K}}c_{K}(n)=$$
$$\underline{t}^{-\underline{1}-M\chi(\underline{E}^{\circ})}\sum_{\underline{n}} \underline{t}^{M(\chi(\underline{E}^{\circ})-\underline{n})}q^{-\sum m_{ij}k_in_j}q^{F(n)-\sum n_i}$$
\begin{equation}
\label{tmp}
\times \sum_{K} q^{1-|K|+n}\cdot t_{\overline{K}}\cdot c_{\overline{K}}(-\chi(E_i^{\circ})-n_i).
\end{equation}
Let $$\xi_i=-\chi(E_i^{\circ}),\,\,\,\,\,\, \underline{n}_1=\underline{\xi}-\underline{n}.$$ Then
$$F(n)-\sum n_i={1\over 2}[\sum m_{ij}n_in_j+\sum m_{ij}n_i\chi(E_{j}^{\bullet})-\sum n_i],$$
so
$$2[F(n_1)-\sum n_{1i}-F(n)+\sum n_{i}]=$$
$$\sum m_{ij}(\xi_i-n_i)(\xi_j-n_j)+\sum m_{ij}(\xi_i-n_i)\chi(E_j^{\bullet})-\sum (\xi_i-n_i)$$
$$-\sum m_{ij}n_in_j-\sum m_{ij}n_i\chi(E_{j}^{\bullet})+\sum n_i=$$
$$-2\sum m_{ij}(\xi_i+\chi(E_{i}^{\bullet})) n_j+2\sum n_j+2(F(\xi)-\sum\xi_i)=$$
$$-2\sum m_{ij}k_in_j+2\sum n_j+2(F(\xi)-\sum\xi_i).$$
Thus (\ref{tmp}) is equal to
$$t^{-1-M\xi}q^{-F(\xi)+\sum \xi_i}q^{1-|K_0|}\sum t^{Mn_1}q^{F(n_1)-\sum n_{1i}}\sum_{K}t_{\overline{K}}q^{|\overline{K}|}c_{\overline{K}}(n_1).$$

\medskip

It rests to compute the powers of $t_{\alpha}$ and of $q$.

Remark that $\sum \xi_i=|K_0|-2,$
so
$\sum \xi_i+1-|K_0|=-1.$

Also $$2F(\xi)=\sum m_{ij} k_ik_j-2\sum m_{ij}k_i\chi(E_{j}^{\bullet})+\sum m_{ij} \chi(E_{i}^{\bullet})\chi(E_j^{\bullet})+$$
$$\sum m_{ij} k_i\chi(E_j^{\bullet})-\sum m_{ij}\chi(E_i^{\bullet})\chi(E_j^{\bullet})+\sum \xi_i=$$
$$\sum m_{ij} k_i k_j-\sum m_{ij} k_i\chi(E_j^{\bullet})+|K_0|-2.$$
The formula of A'Campo (\cite{acampo}) says that
$$1-\mu=\sum m\chi(S_m)=\sum\chi(E_{i}^{\circ})m_{ij}k_j=\sum m_{ij} (\chi(E_{i}^{\bullet})-k_i)k_j,$$
so
$$2F(\xi)=\mu-1+|K_0|-2=2\delta-2.$$
Thus $-F(\xi)-1=-\delta.$  

Also for every $\alpha$ one has
$$1-\mu_{\alpha}=\sum_{j\neq i(\alpha)}m_{i(\alpha)j}\chi(E_j^{\bullet})+m_{i(\alpha),i(\alpha)}(\chi(E_{i(\alpha)}^{\bullet})-1),$$
and for $\beta\neq \alpha$
$$C_{\alpha}\circ C_{\beta}=m_{i(\alpha),i(\beta)},$$
so
$$\sum_{\beta\neq\alpha}C_{\alpha}\circ C_{\beta}=\sum_{j\neq i(\alpha)}m_{i(\alpha),j}k_j+m_{i(\alpha),i(\alpha)}(k_{i(\alpha)}-1)$$
and
$$1-\mu_{\alpha}-C_{\alpha}\circ C_{\beta}=\sum_{j}m_{i(\alpha),j}\chi(E_{j}^{\circ}).$$

$\square$


State University of New York at Stony Brook,\newline
Department of Mathematics\newline
E. mail: \tt{gorsky@mccme.ru, egorsky@math.sunysb.edu}.

\end{document}